\setlist[enumerate]{leftmargin=.5in}
\setlist[itemize]{leftmargin=.5in}
\crefname{hypothesis}{Hypothesis}{Hypotheses}
\crefname{fact}{Fact}{Facts}
\crefname{fact}{Fact}{Facts}
\DeclareMathAlphabet{\mathpzc}{OT1}{pzc}{m}{it}
\DeclareMathAlphabet{\mathcalligra}{T1}{calligra}{m}{n}
\newtheorem{exam}[theorem]{\bf Example}
\newcommand{\ds}{\displaystyle}
\newcommand{\be}{\begin{equation}}
	\newcommand{\ee}{\end{equation}}
\newcommand{\beno}{\begin{equation*}}
	\newcommand{\eeno}{\end{equation*}}
\newcommand{\ba}{\begin{align}}
	\newcommand{\ea}{\end{align}}
\newcommand{\bano}{\begin{align*}}
	\newcommand{\eano}{\end{align*}}
\newcommand{\bea}{\begin{eqnarray}}
	\newcommand{\eea}{\end{eqnarray}}
\newcommand{\beano}{\begin{eqnarray*}}
	\newcommand{\eeano}{\end{eqnarray*}}
\numberwithin{equation}{section}
\newcommand{\oldfactorial}[1]{\mathpalette\oldfactorialaux{#1}}
\newcommand{\oldfactorialaux}[2]{%
	{#1\mkern1mu\oalign{\vrule\,$#1#2\mathstrut$\,\cr\noalign{\hrule}}}}
\def  \argmin {{\tt argmin~}}
\def\sjump#1{[\hskip -1.5pt[#1]\hskip -1.5pt]}
\def \meas {{\rm meas}}
\def \max{{\rm max}}
\def \min{{\rm min}}
\def \sup{{\rm sup}}
\def \ta{{\mathtt{\bf a}}}
\def \esssup{{\rm esssup}}
\def \diam{{\rm diam}}
\def \supp{{\rm supp}}
\def \div {\mathrm{div}}
\def  \argmin {{\tt argmin~}}
\def\sjump#1{[\hskip -1.5pt[#1]\hskip -1.5pt]}
\def \meas {{\rm meas}}
\def \max{{\rm max}}
\def \min{{\rm min}}
\def \sup{{\rm sup}}
\def \ta{{\mathtt{\bf a}}}
\def \esssup{{\rm esssup}}
\def \diam{{\rm diam}}
\def \supp{{\rm supp}}
\def \dx{{\rm dx}}
\def \ds{{\rm ds}}
\title{A convergent adaptive finite element method for a phase-field model of dynamic fracture \thanks{Submitted to the editors DATE.
\funding{This work was funded by the National Science Foundation under grant  no.~2316905..}}}
\author{Ram Manohar \thanks{Department of Mathematics \& Statistics, Texas A \& M  University Corpus-Christi, TX-78412, USA (\email{ram.manohar@tamucc.edu}).}
	\and  S.M. Mallikarjuaniah\thanks{Department of Mathematics \& Statistics, Texas A \& M  University Corpus-Christi, TX-78412, USA
		(\email{M.Muddamallappa@tamucc.edu}).}
}
\begin{document}

\maketitle

% REQUIRED
\begin{abstract}
We propose and analyze an adaptive finite element method for a phase-field model of dynamic brittle fracture. The model couples a second-order hyperbolic equation for elastodynamics with the Ambrosio–Tortorelli regularization of the Francfort–Marigo variational fracture energy, which circumvents the need for explicit crack tracking. Our numerical scheme combines a staggered time-stepping algorithm with a variational inequality formulation to strictly enforce the irreversibility of damage. The mesh adaptation is driven by a residual-based \textit{aposteriori-type} estimator, enabling efficient resolution of the evolving fracture process zone. The main theoretical contribution is a rigorous convergence analysis, where we prove that the sequence of discrete solutions generated by the AFEM converges (up to a tolerance) to a critical point of the governing energy functional. Numerical experiments for a two-dimensional domain containing an edge-crack under dynamic anti-plane shear loading demonstrate our method's capability of autonomously capturing complex phenomena, including crack branching and tortuosity, with significant computational savings over uniform refinement.
\end{abstract}

% REQUIRED
\begin{keywords}
 Phase-field model; Ambrosio-Tortorelli functional, Elastodynamics, Irreversibility constraint,  Adaptive finite element method, Convergence analysis.
\end{keywords}

% REQUIRED
\begin{MSCcodes}
$65\mathrm{N}12$, $65\mathrm{N}15$, $65\mathrm{N}22$, $65\mathrm{N}30$, $65\mathrm{N}50$, $65\mathrm{R}10$.
\end{MSCcodes}

\section{Introduction}
The variational approach of \textit{Francfort and Marigo}~\cite{Francfort1998} treats brittle fracture as a minimization of total elastic and surface energy.
While formalized in the computationally challenging $\widehat{\mathrm{SBV}}$ space, this problem is rendered tractable by phase-field approximations, which regularize sharp cracks with a smooth scalar field.Building on this, a rigorous framework for dynamic fracture was established using a time-discrete scheme based on the Ambrosio--Tortorelli approximation~\cite{bourdin2011time, larsen2010models, larsen2010existence}.
This scheme alternates between solving a wave equation for displacement and minimizing an energy functional for the phase-field variable.
The resulting discrete trajectories are proven to converge to a weak solution of the continuous problem, satisfying an irreversibility constraint while upholding the physical principles of elastodynamics, energy balance, and maximal dissipation~\cite{Giacomini2005,larsen2010models}.
%\begin{itemize}
%	\item \textbf{Elastodynamics:} The displacement field $u$ satisfies the dynamic equation
%	\begin{equation}
%		\rho \ddot{u} - \nabla \cdot (\mu \, v^2 \, \nabla u) = f, \quad  \mbox{in}  \quad \Omega \setminus C, 
%	\end{equation}
%	Where $\Omega$ represents the elastic material in the reference configuration, $C$ denotes the displacement jump-set, $v$ is the regularized phase-field variable approximating the crack-set, and $f$ is the body force. 
%	\item \textbf{Energy Balance:} Mott's improvement on Griffith's criterion \cite{Lawn1993} describes the model's evolution complies with an energy law that includes \textit{elastic},  \textit{kinetic}, and \textit{fracture} energies, 
%	\item \textbf{Irreversibility and Maximal Dissipation:} The system pursues a route of maximum energy dissipation \cite{LarsenIUTAM} while the fracture field evolves monotonically ($v_t \leq 0$). 
%\end{itemize}
To efficiently solve fracture problems, where cracks are highly localized and the corresponding phase-field variable exhibits steep gradients, {Adaptive Finite Element Methods (AFEM)} have become an indispensable computational tool. A uniform mesh fine enough to capture the crack's small length scale everywhere would be computationally prohibitive. AFEM elegantly circumvents this by dynamically refining the computational mesh only in critical regions---primarily around the advancing crack tip---while leaving the mesh coarse elsewhere. This targeted approach ensures both accuracy and efficiency. This adaptive strategy is most often applied to phase-field models based on the {Ambrosio-Tortorelli regularization}~\cite{Ambrosiott1990, Ambrosiottapprox1992}, which transforms the sharp fracture problem into a more tractable sequence of coupled non-linear elliptic boundary value problems that can be discretized using the finite element method~\cite{fernando2025xi,  manohar2025adaptive, manohar2025convergence}. The theoretical foundation for applying AFEM to these models was largely established by Burke and collaborators~\cite{burke2010adaptive, burke2010BurkeOrtnerEndre}. Their seminal work introduced provably convergent adaptive algorithms for quasi-static brittle fracture by creating a rigorous feedback loop between \textit{a posteriori-type estimation} and targeted \textit{mesh refinement} (improving resolution in those specific areas). This methodology was subsequently extended to the generalized Ambrosio--Tortorelli functional to accommodate broader energy densities~\cite{burke2013adaptive}. The need for adaptivity becomes even more acute in the context of \textit{dynamic fracture}. As cracks can propagate rapidly and unpredictably, a responsive, adaptive mesh is not just a matter of efficiency but a necessity for accurately capturing the evolving physics of the failure event. \\

Let $\Omega:=\Omega(t) \subset \mathbb{R}^{2},$ be a smooth, open, connected, bounded domain with a given boundary $\partial \Omega$ and $\Gamma(t)$ is a crack set. For $T_f(\in \mathbb{R})<\infty$, the variable $t\in[0,T_f]$,  fixedis the time and it only enters through the time-dependent loading conditions and $T_f$ is the final time. Set $\Omega_T=\Omega\times [0, T_f]$. We assume that the discontinuity set $\Gamma(t)$ is completely contained within $\Omega(t)$, and $\Gamma(t)$ is a Hausdorff measurable set. For $\kappa >0$ and $\epsilon >0$, we define the regularized elastic energy  $E \colon \mathbb{U}\times \mathbb{V}\to \mathbb{R}^+_\infty$, where $\mathbb{R}^{+}_{\infty}=\mathbb{R}\cup \{+\infty\}$, 
and the regularized crack-surface energy $\mathcal{H} \colon \mathbb{V} \to \mathbb{R}$,
respectively, as
\begin{align}\label{minmization}
	E(u, \, v):=\frac{\mu}{2} \int_{\Omega} \ta(t)\,|\nabla u|^2\,\dx, \quad \text{and} \quad
	\mathcal{H}(v):=\int_{\Omega} \left[ \frac{(1-v)}{\epsilon} + \epsilon \; |\nabla v|^2  \right]  \; dx,
\end{align}
where $\mathbb{V}=\mathbb{U}=H^1(\Omega)$ and $\ta(t)=(1-\kappa)\,(v(t))^2+\kappa$. Here, $\kappa \ll 1$ is a numerical regularization parameter for the bulk energy term. Then, the total energy is given by the functional  $\mathcal{J}_{\epsilon}: \mathbb{U}\times \mathbb{V} \longmapsto \mathbb{R}$ such that
\begin{align}
	\mathcal{J}_{\epsilon}(u, \, v) &:=   E(u, \, v) + \frac{\lambda_c}{c_w} \;  \mathcal{H}(v) \notag \\
	&=\frac{\mu}{2}\int_{\Omega}\ta(t)\,|\nabla u|^2\, \dx +\frac{\lambda_c}{c_w} \;  \,\int_{\Omega}\Big[\frac{(1-v)}{\epsilon} + \epsilon \, |\nabla v|^2 \Big] \, \dx. \label{reg:energy}
\end{align}
At each time step, the energy associated with the phase-field function $v$ is minimized, adhering to irreversibility constraints. At the same time, the displacement field $u$ evolves dynamically, with its behavior significantly influenced by $v$ only within the small region representing the crack, allowing for discontinuities in $u$ within this region. More precisely, the dynamics is governed by the following, for $t\in (0, T_f]$, 
\begin{subequations}
	\begin{align}
		\varrho\,u_{tt}- \div \big(\ta(t)\,(\mu\,\nabla u+\eta\,\nabla u_t )\,\big)=f(x,t), \quad \;\; (x,t)\in \Omega \times [0, T_f] &\label{contwave}\\
		u(x, 0) =u_0(x), \quad \text{and} \quad 
		u_t(x, 0) =u_1(x), \quad \;\; x\in \Omega,&\label{contcond}\\
		v(t)\; \text{minimizes} \;\; \tilde{v} \mapsto \mathcal{J}_{\epsilon}(u; \, \tilde{v}) \quad \text{with} \quad  \tilde{v} \leq v(t),\;\;v\downarrow \text{as}~~ t \downarrow,
	\end{align}   
\end{subequations} 
Here, $u_t$ and $u_{tt}$ denote the first and second-order partial derivatives with respect to $t$, respectively. Note that we first solve $v(x, 0)$ minimizing $v \mapsto \mathcal{J}_{\epsilon}(u_0; \, v)$, where  $v$ represents a scalar \textit{phase-field function}.  \smallskip

Our aim is to study the \textit{dynamic-fracture} evolution in the brittle material under the action of time-varying load $g(t) \in \mbox{L}^{\infty}([0, \, T_f]; W^1_\infty(\Omega)) \cap \mbox{W}^1_1([0, \, T_f]; \mbox{H}^{1}(\Omega))$ applied on an open subset $\Omega_D \subset \Omega$. To simplify notation for boundary conditions, we set up the following 
\begin{equation} 
	\mathrm{Q}(g(t)) :=\left\{ u \in \widehat{\mathrm{SBV}}(\Omega) ~\colon~~ u|_{\Omega_D} = g(t) \right\}.
\end{equation}
Since functions of bounded variation might have discontinuities that are represented in their distributional gradient $\mathrm{D}w$, and assembled in the form of
\begin{align*}
	\mathrm{D} w~=~ \nabla w \mathrm{L}^d+ (w^+(x)-w^-(x)) \otimes \nu_{w}(x) \mathrm{H}^{d-1}\,\oldfactorial \,\mathrm{J}(w)+\mathrm{D}^c w,
\end{align*}
where $\nabla w$ and $\mathrm{J}(w)$ denote the approximate gradient of $w$ and the jump set of $w$, while $w^\pm$  denotes the inner and outer traces of $w$ on $\mathrm{J}(w)$ with respect to $\nu_w$ (the unit normal vector to $\mathrm{J}(w)$). In addition, $\mathrm{D}^c(w)$ represents the Cantor part of the derivative. However,  $\mathrm{H}^d$ and $\mathrm{L}^d$, respectively, represent the $d$-dimensional Hausdorff and Lebesgue measures. For a better understanding of this notation, we refer to \cite{Ambrosio2002}. 
%Although the phase-field variable complies with the  maximum principle as 
%$$0 \leq  v_{\epsilon}( x, \, t_n) \leq v_{\epsilon}(x, \, t_{n-1}) \quad \forall \; x \in \Omega, \quad \mbox{and} \quad  t_{n-1} <t_n, \, n \in [1: \mathtt{N_T}].$$ 
%Simulating dynamic fracture in brittle materials presents significant analytical and numerical challenges due to the evolving internal discontinuities. The model's coupled evolution equations require careful discretization; the displacement dynamics $u$ are governed by a second-order hyperbolic PDE, while the phase-field evolution $v$ results from an energy minimization at each discrete time step. This minimization is subject to an irreversibility constraint, ensuring that cracks propagate but do not heal~\cite{Giacomini2005}, and the formulation is highly amenable to alternating minimization schemes that decouple the displacement and fracture field updates \cite{larsen2010models}. Our primary contribution is the development and rigorous analysis of an adaptive finite element algorithm for this variational model, driven by a robust residual-based error indicator. We provide a comprehensive convergence analysis in \Cref{Cngwithtol}, proving that the sequence of discrete solutions generated by the adaptive algorithm converges to a critical point of the governing energy functional.
%Finally, we address the specific challenges associated with implementing the model's non-standard growth conditions within this adaptive computational framework.\smallskip

This paper presents a complete framework for the adaptive finite element method for the approximation of the dynamic fracture. We begin in \Cref{sec2:varfor} by defining the necessary function spaces for the model's variational formulation. \Cref{sec3:modeldiscre} then describes the numerical approach, covering the time-discretization and finite element methods. The core of our contribution is in \Cref{sec4:convalg}, where we develop an adaptive mesh refinement algorithm guided by a novel residual-type error estimator and prove its convergence as established in \Cref{Cngwithtol}. To demonstrate the model's predictive power, \Cref{expres} presents numerical results for a complex fracture scenario. We conclude in \Cref{sec6:conc} with a summary and final remarks.

%%%%%%%%%%%%%%%%%%%%%%%%
\section{Variational formulation}\label{sec2:varfor}
%%%%%%%%%%%%%%%%%%%%%
This section provides relevant function spaces and a variational formulation, which are essential to develop the groundwork for the forthcoming analysis.

For a positive integer $m\geq 0$, let $ \mbox{W}^m_p(\Omega)$ be the Sobolev space of order $(m,p)$ (cf., \cite{adams1975}), abbreviated by 
\begin{equation*}
	\mbox{W}^m_p(\Omega)~:=\big\{\varphi \in \mbox{L}^p(\Omega):\quad D^l \varphi \in \mbox{L}^p(\Omega), |l|\leq m\big\}.
\end{equation*}
for $1\leq p \leq \infty$, equipped with inner product and the norm
\begin{equation*}
	(\varphi, \psi)_{m,p,\Omega}~=~  \sum_{\iota \leq m}\int_\Omega  D^l \varphi \cdot D^l \psi\ , \dx,   \quad 
	\text{and} \quad  
	\|\varphi\|_{\mbox{W}^m_p(\Omega)}~=~ \Big(\sum_{|l| \leq m}\int_\Omega  |D^l \varphi|^p\, \dx\Big)^{1/p},   
\end{equation*}
respectively. For $p=\infty$, the norm is given by
\begin{equation*}
	\|\varphi\|_{\mbox{W}^m_\infty(\Omega)}~=~ \esssup_{|l| \leq m}\|D^l \varphi\|_{L^\infty(\Omega)}.   
\end{equation*}
Moreover, we write $\mbox{W}^m_2(\Omega):=\mbox{H}^m(\Omega)$ for $p=2$, and the norm is denoted by $\|\cdot\|_{\mbox{H}^m(\Omega)}$. 

Further, let $\mbox{L}^r([0,T_f];\mbox{W}^m_p(\Omega))$ be the Banach space of all $\mbox{L}^r$-integrable functions from $[0,T_f]$ to  $\mbox{W}^m_p(\Omega)$ such that 
$$\mbox{L}^r([0,T_f];\mbox{W}^m_p(\Omega)):=\Big\{\varphi:~ \int_0^{T_f} \|\varphi\|^r_{\mbox{W}^m_p(\Omega)}< \infty\Big\}$$
and the norm on $\mbox{L}^r([0,T_f];\mbox{W}^m_p(\Omega))$  is denoted by
$$\|\varphi\|_{\mbox{L}^r([0,T_f];\mbox{W}^m_p(\Omega))}=\Big(\int_0^{T_f}\|\varphi\|^r_{\mbox{W}^m_p\left(\Omega\right)}\,dt\Big)^{\frac{1}{r}}.$$
%For $1\leq r,\, p\leq \infty$, the space  $W_r^l([0,T_f];\mbox{W}^m_p(\Omega))$ is defined by 
%\begin{align*}
%W_r^l([0,T_f];\mbox{W}^m_p(\Omega)):=&\big\{\varphi:~ \varphi\in L^r([0,T_f];\mbox{W}^m_p(\Omega))~\text{and}\\
%&~~D^j_t  \varphi\in L^r([0,T_f];\mbox{W}^m_p(\Omega)),\; \forall |j|\leq l \big\}
%\end{align*}
%and the associated norm on $W_r^l([0,T_f];\mbox{W}^m_p(\Omega))$ is given by 
%\begin{equation}
%	\|\varphi\|_{{\mbox{W}_r^l \left([0,T_f];\mbox{W}^m_p(\Omega) \right)}}~=~ \Big(\sum_{|j|\leq l}\int_0^{T_f} \|\partial_t^j \varphi(t)\|_{\mbox{W}^m_p(\Omega)}^l\, dt\Big)^{1/l}, \quad \forall\, 1 \leq r,\, p <\infty,   
%\end{equation}
%and the standard modification for $r,\, p=\infty$. 
Furthermore, let $X$ be a Banach space, and for $l>0$, we define Bochner space $\mbox{C}^l([0,T_f];\mbox{X})$, as 
\begin{align*}
\mbox{C}^l([0,T_f];\mbox{X}) :=\big\{\, u:[0,T_f]\to \mbox{X} \ \big|\   \text{the} ~& \text{$i$-th (Bochner) derivative } u^{(i)} \text{ exists and } \\ 
&~~~ u^{(i)}\in \mbox{C}([0,T_f]; \mbox{X})\ \text{for }i=0,1,\dots,l \,\big\}
\end{align*}
associated with norm on \(\mbox{C}^l([0,T_f]; \mbox{X})\) as 
\begin{align*}
\|u\|_{\mbox{C}^l([0,T_f]; \mbox{X})}
:=\sum_{i=0}^{l}\sup_{t\in[0,T_f]}\|u^{(i)}(t)\|_{\mbox{X}}.
\end{align*}
%%%%%%%%%%%%%%%%%%%%%%%%%%%%%%%%%%%%%%%%%%%%%%%%%%%%%%%%%%%%% 

For a fixed time $t=t_n$, we define 
\begin{subequations}
	\begin{eqnarray}
		&&\mathbb{U}_d:=\big\{\psi \in \mathbb{U}\,:~~ \psi=0~~ \text{on}~~ \Omega_D \big\},   \\
		&& \mathbb{U}_g:=\big\{\psi \in \mathbb{U}\,:~~ \psi=g(t_n)~~ \text{on}~~ \Omega_D \big\},\\
		\text{and}~~~ && \nonumber\\
		&& \mathbb{V}_c:=\big\{\varphi \in \mathbb{V}\,:~~ \varphi=0 ~~ \text{on}~~ CR(t_{n-1}) \big\},
	\end{eqnarray}
\end{subequations}
respectively. Further, we set $\mathbb{V}_c^\infty=\mathbb{V}_c\cap \mbox{L}^\infty(\Omega)$.

 For given $f\in \mbox{L}^2([0,T_f]; \mbox{H}^1(\Omega))$, $u_0(x)\in \mbox{H}^2(\Omega)$, $u_1(x)\in \mbox{H}^1(\Omega)$,  we define the function spaces 
\begin{align*}
&\widehat{\mathbb{U}}(0,T_f) :=\mbox{C}([0,T_f];\mbox{H}^2(\Omega))\cap \mbox{C}^1([0,T_f]; \mbox{H}^1(\Omega))\cap \mbox{C}^2([0,T_f];\mbox{L}^2(\Omega)),\\ 
&\widehat{\mathbb{V}}(0,T_f) := \mbox{L}^\infty([0,T_f]; \mathbb{V}_c^\infty) \cap \mbox{W}^1_\infty([0,T_f]; \mbox{L}^2(\Omega)),
\end{align*}
respectively.
It is observed that  $$\widehat{\mathbb{U}}(0,T_f) \subset \mbox{C}([0,T_f]; \mathbb{U}_g)\quad \text{and} \quad \widehat{\mathbb{V}}(0,T_f)  \subset \mbox{L}^\infty([0,T_f]; \mathbb{V}_c^\infty).$$

By fixing parameter constants $\kappa$, $\epsilon$, $\lambda_c$ and $c_w$ for $t\in [0, T_f]$,  one may express  the functional $\mathcal{J}$ from $ \mathbb{U}_g\times \mathbb{V}_c$ to $\mathbb{R}^{+}_{\infty}$ as
\begin{align}
	\mathcal{J}(u, \, v)~=~\frac{1}{2}\int_{\Omega}\Big[ \mu\,\ta(t) \,|\nabla u|^2+2\,\nu\, (1-v)+ \rho \; |\nabla v|^2 \Big] \; \dx,  \label{minmizationfunc1.7}
\end{align}
where $\rho=\frac{2\,\lambda_c\, \epsilon}{c_w}$ and $\nu = \frac{\lambda_c}{c_w\, \epsilon}$, respectively. 
	
Hence, our objective is to search a minimizer $(u, v)\in \widehat{\mathbb{U}}(0,T_f) \times \widehat{\mathbb{V}}(0,T_f)$ that satisfies the following problem in the weak sense, 
\begin{subequations}
	\begin{align}
		\varrho\,\big(u_{tt}, \psi\big)+ \mathcal{B}(\ta(t);u,\psi)&+\widehat{\mathcal{B}}(\ta(t);u,\psi)=\big(f,\psi), \quad~~~  \psi \in \mathbb{U}_d,\label{minu}\\
		u(x, 0) =u_0(x),  \quad \text{and}& \quad   u_t(x, 0) =u_1(x), \quad ~~~~~~~~~~~ x\in \Omega, \label{bdry} \\
		\mathcal{J}(u; v)=&\inf_{\overset{\tilde{v} \in \mathbb{V}_c}{\tilde{v}\leq v}} \mathcal{J}(u; \tilde{v}), \quad \label{minv}
	\end{align}   
\end{subequations}
%%%%%%%%%
where
\begin{equation}
\mathcal{B}(\ta(t);u,\psi):=\int_\Omega \mu\, \ta(t)\,\nabla u \cdot \nabla \psi \, \dx, \;\;  \widehat{\mathcal{B}}(\ta(t);u,\psi):=\int_\Omega \eta\, \ta(t)\,\nabla u_t \cdot \nabla \psi \, \dx
\end{equation}
%\begin{align*}
%&\mathcal{B}(\ta(t);u,\psi):=\int_\Omega \mu\, \ta(t)\,\nabla u \cdot \nabla \psi \, \dx,\\
%\text{and}~~~~&\\
%&\widehat{\mathcal{B}}(\ta(t);u,\psi):=\int_\Omega \eta\, \ta(t)\,\nabla u_t \cdot \nabla \psi \, \dx,
%\end{align*}
respectively.
%%%%%%%%%%%%%%%%%%%%%%%%%%%%%%%%%%%%%%%%%%%%%%%%%%%%%%
By invoking a truncation argument, we notice that any local minimizer $(u, v)$ of $\mathcal{J}$ necessarily satisfies $0\leq v(x) \leq 1$ all the time $x \in \Omega$. This implies that the test function space for $v$ should be $L^\infty(\Omega)$. Consequently, we define the test space for $v$ as $\mathbb{V}_{T}=\mathbb{V}\cap L^\infty(\Omega)$. Note that we minimize our functional with respect to the phase field function $v$ by treating the displacement field variable $u$ as fixed. For simplicity, we write $	\mathcal{J}'(z; v)(\varphi)=	\mathcal{J}_v(z; v)(\varphi)$, where $z$ denotes the solution of \cref{contwave}-\cref{contcond}.

Thus,  $z\in \widehat{\mathbb{U}}(0,T_f)$ solves the wave dynamic problem, then the directional derivative of $\mathcal{J}$ in direction $ \varphi \in \mathbb{V}_{T}$ is defined by, for $t\in [0,T_f]$, 
\begin{align}
	\mathcal{J}'(z; v)(\varphi):=~\int_{\Omega}\big[\rho\, \nabla v \cdot \nabla \varphi -\nu \, \varphi +\mu\,(1-\kappa)\,v\, \varphi\,|\nabla z|^2\big] \, \dx.\label{3.18jprim}
\end{align}
Furthermore, the differentiability of $\mathcal{J}$ is confirmed by the remainder term $$\frac{|\mathcal{R}(u; v, \varphi)|}{~\|\varphi\|_{\mathbb{V}}+\|\varphi\|_{\infty}} \to 0  \quad \text{as} \quad \|\varphi\|_{\mathbb{V}}+\|\varphi\|_{\infty}\rightarrow 0.$$
While the remainder term $\mathcal{R}$ is calculated by utilizing the following definition
$$\mathcal{R}(u; v, \varphi)~=~ \mathcal{J}(u; v+\varphi)-\mathcal{J}(u; v)- \mathcal{J}^\prime(u; v)(\varphi).$$ 
In addition, the functional $\mathcal{J}$ lacks G$\check{a}$territorial differentiation over the whole $\mathbb{U}\times \mathbb{V}$. 
To avoid this, we restricted our analysis to critical points within the subspace $\mathbb{V}_c^\infty$ of $\mathbb{V}_c$.
%%%%%%%%%%%%%%%%%%%%%%%%%%%%%%%%%%%%%%%%%%%%%%%%%
The following proposition shows that the  local minimizer $v$ of the functional  $\mathcal{J}$  satisfies the necessary condition $0\leq v(x,t)\leq 1$. For a proof, one may refer to   \cite{manohar2025adaptive,manohar2025convergence}. 
%%%%%%%%%%%%%%%%%%%%%%%%%%%%%%%%%%%%%%%%%%%%%%%%%%%%%%%%%%%%%%%%%%%
\begin{proposition} \label{prop3.2:v}
	Let $u \in \widehat{\mathbb{U}}(0,T_f) $  be the solution of the problem \cref{contwave}-\cref{contcond}. Further, we assume that $v\in  \widehat{\mathbb{V}}(0,T_f) $ is the minimizer of the functional \cref{reg:energy}, then $v(x,t)$ satisfies the condition $0 \leq v(x,t) \leq 1$ for {\it a.e.} $x\in \Omega$, $t\in [0,T_f]$.
\end{proposition}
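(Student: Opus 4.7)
My strategy combines a pointwise truncation argument with the irreversibility constraint in \cref{minv}. Introduce the one-sided truncation $v^+:=\max\{v,0\}$. Standard chain-rule results for Sobolev truncation give $v^+\in\mathbb{V}_c^\infty$ with $|\nabla v^+|\le|\nabla v|$ a.e., so $v^+$ is a legitimate competitor in the same function-space setting as $v$. The plan is to establish $\mathcal{J}(u;v^+)\le \mathcal{J}(u;v)$, with strict inequality when $|\{v<0\}|>0$, in order to conclude $v\ge 0$, and to invoke monotonicity of $v$ in time in order to obtain $v\le 1$.

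For the lower bound $v\ge 0$, I would first verify admissibility of $v^+$ under the constraint $\tilde v\le v(\cdot,t_{n-1})$ by induction on the time step: the inductive hypothesis $v(\cdot,t_{n-1})\ge 0$ combined with monotonicity of $x\mapsto\max\{x,0\}$ gives $v^+\le v(\cdot,t_{n-1})$. Splitting $\Omega$ into $\{v\ge 0\}$ and $\{v<0\}$ and inspecting \cref{minmizationfunc1.7} term by term on $\{v<0\}$, each contribution decreases: the bulk coefficient $\ta(t)$ drops since $(v^+)^2=0<v^2$; the surface term satisfies $2\nu(1-v^+)=2\nu<2\nu(1-v)$; and the gradient penalty $\rho|\nabla v^+|^2=0\le\rho|\nabla v|^2$. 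Hence $\mathcal{J}(u;v^+)\le\mathcal{J}(u;v)$, with equality only if $|\{v<0\}|=0$, so minimality of $v$ forces $v\ge 0$ a.e.

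For the upper bound $v\le 1$, I would rely on the irreversibility clause $v\downarrow$ as $t\downarrow$ rather than a pointwise truncation, since the linear surface term $2\nu(1-v)$ in \cref{minmizationfunc1.7} is actually decreasing in $v$ and so does not automatically favor truncation from above. The constraint in \cref{minv} propagates any initial pointwise bound forward: $v(\cdot,t_n)\le v(\cdot,t_{n-1})\le\cdots\le v(\cdot,0)$, so the problem reduces to establishing $v(\cdot,0)\le 1$, which is built into the problem setup (the initial phase field equals one in the undamaged region and zero on the prescribed initial crack). The \emph{main obstacle} is precisely this asymmetry between the two bounds: the truncation argument that delivers the lower bound cleanly fails for the upper bound because of the linear-in-$v$ surface term, so one must lean on the structural monotonicity in time and a suitable initial datum rather than a single pointwise inequality. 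If one wished to avoid the initial-data assumption altogether, I would instead test the Euler--Lagrange inequality \cref{3.18jprim} at $t=0$ against the non-negative admissible variation $\varphi=(v(\cdot,0)-1)^+$ and use non-negativity of $\mu(1-\kappa)v\varphi|\nabla u_0|^2$ and $\rho|\nabla\varphi|^2$ on $\{v(\cdot,0)>1\}$ together with a Poincar\'e-type estimate to absorb the unfavorable $-\nu\varphi$ contribution and conclude $|\{v(\cdot,0)>1\}|=0$.
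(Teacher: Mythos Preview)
Your lower-bound argument via the truncation $v^{+}=\max\{v,0\}$ is exactly the mechanism the paper invokes (``by invoking a truncation argument''); your check of admissibility under the irreversibility constraint by induction on the time level and the term-by-term comparison of the three contributions in \cref{minmizationfunc1.7} are both correct.

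For the upper bound you have correctly diagnosed the genuine difficulty: because the surface term here is the AT1 expression $2\nu(1-v)$, which is \emph{linear} and decreasing in $v$, the truncation $\min\{v,1\}$ does not automatically lower the energy (on $\{v>1\}$ the bulk and gradient terms drop, but the surface contribution rises by $2\nu(v-1)$). Your remedy via the monotonicity clause $v\downarrow$ is the natural one, but there is a gap in the base case. In the continuous formulation the paper does \emph{not} prescribe $v(\cdot,0)\equiv 1$; it defines $v(\cdot,0)$ as a minimizer of $\tilde v\mapsto\mathcal J_{\epsilon}(u_0;\tilde v)$ (see the sentence following \cref{contcond}). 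So the burden is precisely to show $v(\cdot,0)\le 1$ for this unconstrained minimizer, and your fallback---testing the Euler--Lagrange identity with $\varphi=(v(\cdot,0)-1)^{+}$---does not close the argument: it yields
\[
\int_{\{v_0>1\}}\Bigl[\rho\,|\nabla v_0|^{2}+\mu(1-\kappa)\,v_0(v_0-1)\,|\nabla u_0|^{2}\Bigr]\,dx
=\nu\int_{\{v_0>1\}}(v_0-1)\,dx,
\]
which merely balances two non-negative quantities and gives no contradiction. There is no Poincar\'e inequality available for $(v_0-1)^{+}$, since nothing forces it to vanish on a set of positive capacity. For the AT1 functional as written, $v\le 1$ does not follow from minimality alone; it must come either from an explicit box constraint on the admissible set or from an initial datum satisfying $v(\cdot,0)\le 1$. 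The paper defers the proof to its companion references, and since the discrete scheme (\Cref{adapalg}, Step~1) explicitly initializes $v_0=1$, the intended route is the one you outline---but the assumption on the initial phase field should be made explicit rather than claimed as a consequence of minimization.
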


Next we discretize our minimization problem utilizing the AFEM method in the following section. 
%%%%%%%%%%%%%%%%%%%%%%%%%%%%%%%%%%%%%%%%%%%%%%%%
\section{Discrete version of the model} \label{sec3:modeldiscre}
This section describes the finite element setup for our model problem. \smallskip

Let $\mathscr{T}_{h} $  be a family of regular simplicial triangulations of the domain $\bar{\Omega}$ such that the boundaries of the triangles exactly represent the boundary $\Gamma$. 
Further, for any two distinct triangles $\tau_i,\, \tau_j \in \mathscr{T}_{h},\, i \neq j$, then their intersection is either empty, a vertex, an edge, or a $k-$dimensional face, where $0\leq k \leq d-1$. For each element $\tau \in \mathscr{T}_h$, we define its diameter as $h_\tau=\diam(\tau)$ and the mesh size $h=\max_{\tau \in \mathscr{T}_h} h_\tau$. In addition,ion, we set $h_{e}:=\diam(e)$.
Next, we define the set of all edges or $(d-1)$-dimensional faces by $\mathcal{E}_{h}$ such that $\mathcal{E}_{h}:=\mathcal{E}_{int, h} \cup \mathcal{E}_{bd,h}$, where the boundary edges $\mathcal{E}_{bd,h}$ are given by the Dirichlet boundary $\mathcal{E}_{D, h}:= \{e_j \in \mathcal{E}_{h}:~~e_j \subset \bar{\Omega}_D \}$, and the Neumann boundary $\mathcal{E}_{N, h}:= \{e_j \in \mathcal{E}_{h}:~~ e_j \subset \partial \Omega_N \},$ respectively, while the set of interior edges $\mathcal{E}_{int, h}$ is denoted by $\mathcal{E}_{int, h}:= \{e_j \in \mathcal{E}_{h} \backslash (\bar{\Omega}_{D,h}\cup \partial \Omega_{N,h})\}$.
Furthermore, we assume that the triangulation satisfies the mesh shape-regularity condition $\sup_{\tau \in \mathscr{T}_h} \frac{h_\tau}{\varrho_\tau} \leq c_{\varrho},$ where $\varrho_\tau$
denotes the largest diameter of the inscribed $d$-dimensional ball in $\tau$, and $c_{\varrho}$ is a positive constant. In addition, for each physical element $\tau \in \mathscr{T}_h$, there exists an affine mapping $F:\hat{\tau}\mapsto \tau$, where $\hat{\tau}$ is the reference simplex such that
$\hat{\tau}:= \big\{\hat{\mathtt{x}}~:~~ 0<\sum_{j=1}^d \hat{x}_j<1, \quad \text{for}\;\; \hat{x}_j >0 \; \text{and}\;\; j\in [1:d] \big\}.$ 
We define the index set $\mathcal{N}_h \subset \mathbb{N}$ for the vertices of $\mathscr{T}_h$. We denote the basis function by $\xi_i$, $i \in \mathcal{N}_h$, such that $\xi_i$'s are continuous piecewise linear functions, and $\xi_i(x_j)=\nu_{ij}$,   then we will use the following notation in our analysis
$$\mathcal{N}_{\Omega, h}=\big\{j \in \mathcal{N}_h~| ~~~ x_j \in \bar{\Omega} \big\}, \quad  
\omega_{\tau_j}:= \bigcup_{\overset{j\in \mathcal{N}_h}{x_j \in \bar{\tau}_j}} \omega_j, \quad \text{and} \quad  \omega_{e_j}:=\bigcup_{\overset{j\in \mathcal{N}_h}{x_j \in \bar{e}_j}}  \omega_j$$ respectively, where $\omega_j:= \supp(v_j)$, is the closure of the union of elements $\tau_j \in \mathscr{T}_h$ that have $x_j$ as the position of a vertex for $j\in \mathcal{N}_h$.

We now divide the time domain $[0, T_f]$ into $\tt{N}_T$-parts with considering the time nodal points $0=t_0<t_1<t_2<\ldots < t_{\tt{N}_T}=T_f$ such that $j_n=(t_{n-1}, t_n]$ and $k_n := t_n-t_{n-1}$, $n=1,\,2, \ldots,\, \tt{N}_T$, respectively. In addition, we associate a conforming shape-regular triangulation $\mathscr{T}^n_{h}$ of the domain $\Omega$ with $t=t_n,\, 0\leq n \leq \tt{N}_T.$ The meshes are assumed to be compatible in the sense that for any two consecutive meshes $\mathscr{T}^{n-1}_{h}$ and $\mathscr{T}^n_{h}$, $\mathscr{T}^n_{h}$
can be obtained from $\mathscr{T}^{n-1}_{h}$ by
locally coarsening some of its elements and then locally refining some (possibly other) elements. The finite element spaces
corresponding to $\mathscr{T}^n_{h}$
are given by
$\mathbb{V}^n_h$, $\mathbb{U}^n_{d,h}$, and $\mathbb{U}^n_{g, h}$, at time  level $t=t_n$, $n=1,\,2, \ldots,\, \tt{N}_T$,
\begin{align*}
	&\mathbb{V}^n_h:=\big\{\sum_{j\in \mathcal{N}_h} \Lambda_j\xi_j: ~~ \Lambda_j\in \mathbb{R} \big\},  \nonumber\\
	&\mathbb{U}^n_{d,h}:=\big\{\sum_{j\in \mathcal{N}_h} \Lambda_j\xi_j: ~~ \Lambda_j\in \mathbb{R},~~ \Lambda_j=0~~ \text{for all}~j \in \mathcal{N}_{d,h}\big\},~\text{and}\\
	&\mathbb{U}^n_{g, h}:=\big\{\sum_{j\in \mathcal{N}_h} \Lambda_j\xi_j: ~~ \Lambda_j\in \mathbb{R},~~ \Lambda_j=g(t_n, x_j)~~ \text{for all}~j \in \mathcal{N}_{d,h} \big\}, 
\end{align*}
respectively. For a given tolerance ${\it \Xi_{CR}}$, and for $\varphi_h \in \mathbb{V}^n_h$, define a discrete version of $CR(t_{n-1})$, by
\begin{eqnarray*}
	\mathcal{E}_{h, CR}^{n-1}:=\big\{ e_j \in \mathcal{E}_h\,\colon ~~ \varphi_h(x, t_{n-1})\leq {\it \Xi_{CR}},~~ \text{for all}~~ x\in \bar{e}_j \big\},
\end{eqnarray*}
such that $CR_h(t_{n-1}):=\bigcup_{e_j \in \mathcal{E}_{h, CR}^{n-1}} \bar{e}_{j}$. Hence, the finite element space $\mathbb{V}^n_{c,h}$ is defined as
\begin{eqnarray*}
	\mathbb{V}^n_{c,h}:= \big\{\varphi_h\in \mathbb{V}^n_h~:~~ \varphi_h(x)=0,~~ \text{for all}~~ x\in CR_h(t_{n-1})\big\}.
\end{eqnarray*}
Let $u_h~:~[0, T_f] \longrightarrow \mathbb{V}_h$ be the piecewise linear interpolant in time corresponding to the nodal values  $\{u_h^n\}$, $n=0,\,1,\,2, \ldots,\, \tt{N}_T$, such that 
$$u_h(t)=\frac{t-t_{n-1}}{k_n}\,u_h^n+\frac{t_n-t}{k_n}\,u_h^{n-1},\quad t\in j_n,\; n=1,\,2, \ldots,\, \tt{N}_T$$
We now define the first and second backward finite differences by
\begin{subequations}
	\begin{align}
		& \delta u_h^n:=\begin{cases}
			\frac{u_h^n- u_h^{n-1}}{k_n}, \quad n = [1:\tt{N}_T] \\ \smallskip
			\tilde{u}_h^0=:\pi_h^0u_1, \quad n=0,
		\end{cases} \quad \text{and} \quad 
		\delta^2 u_h^n:= \frac{\delta u_h^n-\delta u_h^{n-1}}{k_n}, \quad n = [1:\tt{N}_T],
	\end{align}
\end{subequations}
where $\pi_h^0$ is a suitable projection operator in the related finite element space. The discrete version of the minimizing problem can expressed as
\begin{subequations}
	\begin{align}
&~~~~~~~~\begin{cases} %\label{minuhcond}
			\varrho\, \big(\delta^2 u_h^n, \psi_h \big)+ \mathcal{B}^n_h(\ta_h^{n-1};u_h^n,\psi_h)+\widehat{\mathcal{B}^n_h}(\ta_h^{n-1};u_h^n,\psi_h)=\big(f_h^n,\psi_h), \quad  \psi_h \in \mathbb{V}_h^n \label{minuh} \\ %\label{minuh}
			~u_h^1:=u_h^0+k_1 \tilde{u}_h^0, \quad \text{and} \quad u_h^0:=\pi_h^0u_0, %\label{condiminuh}
		\end{cases} \\
	&~~~~~~~~~~~~\mathcal{J}^n_h(u^n_h; \, v^n_h)=~ \frac{1}{2}\int_{\Omega}\Big[ \mu\, \ta_h^{n}\,|\nabla u_h^n|^2+ 2\,\nu\, (1-v_h^n)+ \rho \; |\nabla v_h^n|^2 \Big] \; \dx \label{disminmizationfunc4.1}
	\end{align}   
\end{subequations}
with 
\begin{equation}
\mathcal{B}^n_h(\ta_h^{n-1};u_h^n,\psi_h):=  \big(\mu\,\ta_h^{n-1}\,\nabla u_h^n, \nabla \psi_h\big), \;\; \widehat{\mathcal{B}^n_h}(\ta_h^{n-1};u_h^n,\psi_h):=  \big(\eta\, \ta_h^{n-1}\,\nabla \delta u_h^n, \nabla \psi_h\big)
\end{equation}
%\begin{align*}
%&\mathcal{B}^n_h(\ta_h^{n-1};u_h^n,\psi_h):=  \big(\mu\,\ta_h^{n-1}\,\nabla u_h^n, \nabla \psi_h\big),\\ \text{and}\; & \\ 
%&\widehat{\mathcal{B}^n_h}(\ta_h^{n-1};u_h^n,\psi_h):=  \big(\eta\, \ta_h^{n-1}\,\nabla \delta u_h^n, \nabla \psi_h\big), 
%\end{align*}
where $\ta_h^{n}=(1-\kappa)(v_h^{n})^2+\kappa$ and $f_h^n:=\frac{1}{k_n}\int_{j_n}f_h\,\dx$, respectively. In addition, $f_h$ is a suitable approximation or projection of $f$.

To determine the critical points, we define the  derivative of the approximated  functional  ${(\mathcal{J}^n_h)}$ as
\begin{align}   
	{(\mathcal{J}^n_h)}^{\prime}(z_h^n;v_h^n)(\varphi_h)~:=~\int_{\Omega}\Big[\rho\,\nabla v_h^n \cdot \nabla \varphi_h - \nu \, \varphi_h + \mu\,(1-\kappa)\,(v^n_h \, \varphi_h)\, |\nabla z_h^n|^2\Big] \, \dx,  \label{ddofJbfem} 
\end{align}   
where $z_h^n$ is a solution of \cref{minuh}.
%%%%%%%%%%%%%%%%%%%%%%%%%%%
\begin{definition}\label{def4.1} Let $v^n_h\in \mathbb{V}^n_{c,h}$ is said to be a critical point of  $\mathcal{J}_h^n$ if  $v^n_h$ satisfy the condition $(\mathcal{J}^n_h)^{\prime}(u_h^n; v_h^n) (\varphi_h)=0$, $\forall \varphi_h \in  \mathbb{V}^n_{c,h}$ and  $u_h^n\in \mathbb{U}^n_{g,h}$ which solve the system \cref{minuh}.
\end{definition}
%%%%%%%%%%%%%%%%%%%%%%%%%%%%%%%%%%%%%%%%%%%%%%%%%%%%%%%%%%%%%%%%%%
The following proposition, which is based on \Cref{def4.1}, states the critical points of the functional $\mathcal{J}^n_h(\ast; \cdot)$. The comprehensive proof of the proposition is included in Appendix A.
%%%%%%%%%%%%%%%%%%%%%%%%%%%%%%%%%%%%%%%%%%%%%%%%%%%%%%%%%%%%%%%%%%%%%%%
\begin{proposition} \label{prop4.2:vh}
	Let $u_h^n \in \mathbb{U}^n_{g,h}$ be the solution of \cref{minuh}, and $v_h^n \in \mathbb{V}^n_{c,h}$ such that $(\mathcal{J}^n_h)^{\prime}(u_h^n; v_h^n) (\phi_h)=0,~ \forall \phi_h \in \mathbb{V}^n_{c,h}$, then $v^n_h$ satisfies the condition $0 \leq v^n_h(x) \leq 1$ for all $x\in \Omega$.
\end{proposition}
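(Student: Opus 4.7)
The strategy is to mirror the proof of Proposition 3.2 in the discrete/finite-element setting, exploiting the strict convexity of $v\mapsto \mathcal{J}^n_h(u^n_h;v)$. Indeed, for fixed $u^n_h$, the three contributions to $\mathcal{J}^n_h(u^n_h;\cdot)$ are (i) the term $\tfrac{\mu}{2}\int ((1-\kappa)v^2+\kappa)|\nabla u^n_h|^2$, convex and coercive in $v$; (ii) the linear term $-\nu\int v$; and (iii) $\tfrac{\rho}{2}\int|\nabla v|^2$, convex. Hence $v^n_h$ satisfying $(\mathcal{J}^n_h)'(u^n_h;v^n_h)(\phi_h)=0$ for all $\phi_h\in\mathbb{V}^n_{c,h}$ is not merely a critical point but the \emph{unique global minimizer} of $\mathcal{J}^n_h(u^n_h;\cdot)$ on $\mathbb{V}^n_{c,h}$. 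This reduction to minimality is the key that lets us argue by exhibiting an admissible competitor.

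Define the truncation map $T(s):=\min\{\max\{s,0\},1\}$ and set $\tilde v^n_h\in\mathbb{V}^n_h$ as the nodal Lagrange interpolant of $T(v^n_h)$, i.e.\ $\tilde v^n_h(x_j):=T(v^n_h(x_j))$ at every node $x_j\in\mathcal{N}_h$. Admissibility $\tilde v^n_h\in\mathbb{V}^n_{c,h}$ is immediate: on $CR_h(t_{n-1})$ one has $v^n_h=0$ at the relevant nodes, and $T(0)=0$, so $\tilde v^n_h$ vanishes on $CR_h(t_{n-1})$ as well.

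Next compare $\mathcal{J}^n_h(u^n_h;\tilde v^n_h)$ with $\mathcal{J}^n_h(u^n_h;v^n_h)$ contribution by contribution. For the Dirichlet term, the classical discrete Stampacchia/nodal-truncation inequality for piecewise linear functions on shape-regular simplicial meshes gives $\int|\nabla \tilde v^n_h|^2\le\int|\nabla v^n_h|^2$. For the elastic term, the pointwise estimate $|T(s)|\le |s|$ together with the element-wise $P_1$ mass-matrix identity yields $\int_\tau (\tilde v^n_h)^2|\nabla u^n_h|^2\le\int_\tau (v^n_h)^2|\nabla u^n_h|^2$ on every $\tau\in\mathscr{T}^n_h$. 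The most delicate contribution is the linear term $-\nu\int v$: truncation below $0$ strictly decreases it, whereas truncation above $1$ would increase it, so one must combine this increase with the corresponding decrease of the elastic and gradient terms. Because $\mathcal{J}^n_h(u^n_h;\cdot)$ is a strictly convex quadratic, a direct way to close the argument is to apply the first-order identity with the admissible test function $\phi_h:=\tilde v^n_h-v^n_h\in\mathbb{V}^n_{c,h}$: the strict-convexity expansion
\begin{equation*}
\mathcal{J}^n_h(u^n_h;\tilde v^n_h) = \mathcal{J}^n_h(u^n_h;v^n_h) + (\mathcal{J}^n_h)'(u^n_h;v^n_h)(\phi_h) + \tfrac12 Q(\phi_h,\phi_h),
\end{equation*}
with $Q$ the (positive-definite) quadratic form, reduces via the critical-point condition to $\mathcal{J}^n_h(u^n_h;\tilde v^n_h)=\mathcal{J}^n_h(u^n_h;v^n_h)+\tfrac12 Q(\phi_h,\phi_h)$, so combining the three term-wise estimates above with the minimality of $v^n_h$ forces $\phi_h\equiv 0$, i.e.\ $\tilde v^n_h=v^n_h$ at every node.

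The main obstacle is precisely the sign analysis of the linear term together with the need for the discrete nodal-truncation gradient inequality; all other steps are routine manipulations of $P_1$ mass/stiffness matrices. Once $v^n_h(x_j)=T(v^n_h(x_j))\in[0,1]$ is established at every node, the fact that $v^n_h$ is piecewise affine on the simplicial mesh $\mathscr{T}^n_h$ and that affine functions on a simplex attain their extrema at the vertices yields $0\le v^n_h(x)\le 1$ for every $x\in\bar\Omega$, which is the assertion.
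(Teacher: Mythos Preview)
Your proposal contains concrete errors in two of the three term-wise estimates, and the overall logic does not close.

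\textbf{Elastic term.} You claim that $|T(s)|\le|s|$ at the nodes, together with the $P_1$ mass-matrix identity, yields $\int_\tau(\tilde v^n_h)^2|\nabla u^n_h|^2\le\int_\tau(v^n_h)^2|\nabla u^n_h|^2$. This is false: nodal truncation of a $P_1$ function can \emph{increase} its $L^2$-norm on an element. On $[0,1]$ with nodal values $v(0)=0.1$, $v(1)=-0.01$ one has $\int_0^1 v^2\approx 0.00303$, while after truncating the second node to $0$ one gets $\int_0^1\tilde v^2=0.01/3\approx 0.00333$. The local mass matrix has positive off-diagonal entries, so ``$|\tilde v_j|\le|v_j|$ for all $j$'' does \emph{not} imply $\tilde v^\top M\tilde v\le v^\top M v$.

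\textbf{Gradient term.} The ``discrete Stampacchia/nodal-truncation inequality'' $\int_\Omega|\nabla\tilde v^n_h|^2\le\int_\Omega|\nabla v^n_h|^2$ for piecewise linears requires the global stiffness matrix to be an M-matrix (i.e.\ a weakly acute/Delaunay mesh). The paper assumes only shape-regularity. On the obtuse triangle with vertices $(0,0),(2,0),(1,0.1)$ and nodal values $0,2,1$ one has $v_h=x$ with $\int_\tau|\nabla v_h|^2=0.1$, whereas truncating the second node to $1$ gives the interpolant $0.5x+5y$ with $\int_\tau|\nabla\tilde v_h|^2\approx 2.525$. So the inequality can fail.

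\textbf{Closing step.} The quadratic expansion plus the critical-point condition gives $\mathcal{J}^n_h(u^n_h;\tilde v^n_h)=\mathcal{J}^n_h(u^n_h;v^n_h)+\tfrac12 Q(\phi_h,\phi_h)\ge \mathcal{J}^n_h(u^n_h;v^n_h)$; minimality of $v^n_h$ says the same thing. To force $\phi_h=0$ you need the \emph{reverse} inequality, which can only come from the term-wise comparison --- but you yourself flag the linear term as unresolved (truncation above $1$ increases $\nu\int(1-v)$), and the other two term-wise claims are, as shown, incorrect. The argument is therefore circular.

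The paper defers its proof to Appendix~A, which is not part of the provided excerpt, so a direct comparison of approaches is not possible here. What can be said is that your route, as written, does not go through without an additional (unstated) weakly acute mesh hypothesis and a different treatment of the zero-order terms.
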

The approximate model problem can be stated as follows: To find $z_h^n\in \mathbb{U}^n_{g,h}$ which solves \cref{minuh}, we minimize the functional at $t=t_n, \, n=1,\,2,\, \ldots, \mathtt{N}_T$, 
\begin{equation}
	\mathcal{J}_h^n(z_h^n; v_h^n)=\underset{\bar{v}_h^n \in  \mathbb{V}^n_{c,h}}{\argmin} \mathcal{J}^n_h(z_h^n; \bar{v}^n_h). \label{minag4.3}
\end{equation}
It follows that, at $t=t_n, \, n=1,\,2,\, \ldots, \mathtt{N}_T$,
\begin{equation}
	v_h^n \in \argmin\big\{\mathcal{J}^n_h(z_h^n; \bar{v}^n_h)~:~~ \bar{v}_h^n \in \mathbb{V}^n_{c,h}, \;\;z_h^n\in \mathbb{U}^n_{g,h}\;\; \text{solves}\;\; \cref{minuh} \big\}. \label{minagjh4.4}
\end{equation}
The next section presents different strategies for conducting the minimization problem provided by equation \cref{minag4.3}.  
%%%%%%%%%%%%%%%%%%%%%%%%%%%%%%%%
\section{Convergence Analysis} \label{sec4:convalg}
%%%%%%%%%%%%%%%%%%%%%%%%%%%%%%%%%%%
Our aim is to develop the adaptive algorithm  and analyze the convergence of sequences generated by the adaptive algorithm. However, we note that the adaptive algorithm refines the mesh utilizing the minimization technique subsequent to the completion of the monolithic minimization technique \cite[Sec 4]{manohar2025convergence}. To minimize the functional $\mathcal{J}$ on the infinite-dimensional space $\mathbb{U}_g\times \mathbb{V}_c$ at $t=t_n$.  For simplicity, we write $u(t_n):=u^n$ and $v(t_n):=v^n$ at the time level $t_n, \, n=1,\,2,\, \ldots,\, \mathtt{N_T}$. Although,
 we consider the tolerances $\Xi_v$, $\Xi_{v_n}$ and $\Xi_{RF}$ for stopping the minimization and refinement loops, respectively; mesh size $h_k=\max_{\tau \in \mathscr{T}_{h, k}} \diam(\tau)$ as the maximum diameter of elements in the $k$-th refinement level $\mathscr{T}_{h, k}$; and marking parameter $0 < \vartheta \leq 1$ to determine elements for refinement. A crucial part of the adaptive algorithm is the local refinement indicator $\mathpzc{R}_{\tau}$ in equation \cref{esteta4.11bb}. It informs the mesh refining operation in the following adaptive algorith, which is guided by certain parameters and serves to boost numerical solution accuracy.
\smallskip
\begin{algorithm}[htp]
	\caption{Adaptive Algorithm}
	\label{adapalg}
	\begin{algorithmic}
		\vspace{0.2cm}
		
		\STATE	\textbf{Step 1.~}{\tt Initialization:} Input crack field $v_0=1$~ and~ $\mathscr{T}_{h,0}$.  \vspace{0.2cm}
		
		\STATE	\textbf{Step 2.~}{\tt Start time loop:} For $t_n$, $n=1,\,2,\ldots,\, N_T.$ \\
		\hspace{2cm} Compute $u^n_j$ and $v^n_j$ at $t_n$, for $j=1,\,2,\ldots$ \\ \vspace{0.1cm}
		\hspace{2.5cm} Compute $u^n_j$  from \cref{minuh} fixing  $v_j^{n-1}$\\
		
		\hspace{2.5cm} Compute $v_j^n=\underset{\bar{v}\in \mathbb{V}_{c}}{{\argmin}} \{\mathcal{J}(u_j^n, \bar{v})\}$
		\\ \vspace{0.2cm}
		
		\hspace{2.0cm} \textbf{Check:} If $ \|v_j^{n}-v_j^{n-1}\|_{L^\infty(\Omega)}\geq \Xi_{v_n}$\\
		
		\STATE	\hspace{5cm} Repeat \textbf{\tt Step 2} \\
		
		\hspace{4.5cm} Else  \\
		
		\hspace{5cm} \textbf{\tt Break}; \\
		
		\hspace{4.5cm} End Else \\
		\hspace{3.5cm}  If $v^n_j \leq \Xi_v $\\
		\hspace{4.5cm} $v^n_j= 0$\\
		\hspace{4.5cm} Else if $ v_n^j\geq 1.0$\\
		\hspace{5cm} $v_n^j=1.0$\\
		\hspace{4.5cm} End Else if\\
		\hspace{4.0cm} End If \\
		\hspace{3.5cm} End If \\ \vspace{0.2cm}
		\STATE	\textbf{Step 3.} Set $u_j=u_j^n$ and $v_j=v_j^n$. \\ \vspace{0.2cm}
		\STATE	\textbf{Step 4.} If $\big(\sum_{\tau \in \mathscr{T}^n_{h_j}}|\mathpzc{R}_{\tau}(u_j, v_j)|^2\big)^{1/2}> \Xi_{RF}$, \\ \vspace{0.15cm} 
		
		\hspace{2cm} Determine a smallest subset $\mathcal{M}^n_{h_j}$ of $\mathcal{T}^n_{h_j}$ satisfying \\ \vspace{0.1cm} 
		\hspace{2.7cm} $\sum_{\tau \in \mathcal{M}^n_{h_j}}|\mathpzc{R}_{\tau}(u_j, v_j)|^2 \geq \vartheta \sum_{\tau \in \mathscr{T}^n_{h_j}}|\mathpzc{R}_{\tau}(u_j, v_j)|^2$ \\ \vspace{0.2cm} 
		\hspace{2.7cm} Refine the set  $\tau \in \mathcal{M}^n_{h_j}$, then generate  new mesh $\mathscr{T}^n_{h_{j+1}}$ (say)\\ \vspace{0.1cm} 
		\hspace{1.55cm} End If 
		
		\STATE	\textbf{Step 5.} Set $u_h(t_n)=u_j$, $v_h(t_n)=v_j$, \\ \vspace{0.15cm} 
		\hspace{1.5cm} Repeat the steps. \vspace{0.15cm} \\
	\end{algorithmic}
\end{algorithm}

In order to derive the \textit{aposteriori-type} indicator, we first state the following quasi-interpolation approximation lemma (cf., \cite{Verfurth1999}). 
%%%%%%%%%%%%%%%%%%%%%%%%%%%
%Let $\pi^d_h$ and $\pi^c_h$ be the {\it quasi interpolants} such that $\pi^d_h \phi \in \mathbb{U}_{d,h}$ and $\pi^c_h \phi \in \mathbb{V}_{c,h}$ for $\phi \in \mathbb{V}$ and defined as 
%\begin{equation} \label{app4.6bb}
%\big(\pi^d_h \phi \big)(x) := \sum_{j\in \mathcal{N}_h \setminus \mathcal{N}_{d,h}} \Big(\frac{1}{|M_j|}\int_{M_j} \phi \,\dx \Big)(x), \, 
%\big(\pi^c_h \phi \big)(x) := \sum_{\overset{j\in \mathcal{N}_h}{x_j \notin CR_h}} \Big(\frac{1}{|M_j|}\int_{M_j} \phi \,\dx \Big)(x), 
%\end{equation}
%\begin{align}\label{app4.6bb}
%&	\big(\pi^d_h \phi \big)(x) := \sum_{j\in \mathcal{N}_h \setminus \mathcal{N}_{d,h}} \Big(\frac{1}{|M_j|}\int_{M_j} \phi \,\dx \Big)(x), \\
%	~~~\text{and}~~~& \nonumber\\
%&	\big(\pi^c_h \phi \big)(x) := \sum_{\overset{j\in \mathcal{N}_h}{x_j \notin CR_h}} \Big(\frac{1}{|M_j|}\int_{M_j} \phi \,\dx \Big)(x), 
%\end{align}
%respectively, where $M_j$ denotes the maximal set contained in $\omega_j$ \cite{Verfurth1999}. 
\begin{lemma} \label{4.3lemmaapr}
	Let $\pi^c_h$ be the {\it quasi interpolants}, then, there exist positive constant  $c_{41}$ and $c_{42}$ the constants may depend on the shape-regularity parameter of the mesh $\tau$ but not on the mesh size, such that, for all elements $\tau_j \in \mathscr{T}_h$  and edges $e_j \in \mathcal{E}_h$, $\forall\;  j\in \mathcal{N}_h$ and  $m \in \{0, 1\}$, 
for $\varphi \in \mathbb{V}_c,$  we have 
	\begin{subequations}
	\begin{align}   \label{4.10bbapr}
			& \|\varphi-\pi^c_h \varphi\|_{H^m(\tau_j)} \leq c_{41}\,h^{1-m}_{\tau_j} \|\nabla \varphi\|_{L^2(\omega_{\tau_j})} \\
			\text{and} ~~~~~ & \nonumber\\
			& \|\varphi-\pi^c_h \varphi\|_{L^2(e_j)} \leq c_{42}\,h^{1/2}_{e_j} \|\nabla \varphi\|_{L^2(\omega_{e_j})},
		\end{align}
	\end{subequations}
	respectively.
\end{lemma}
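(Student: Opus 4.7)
The plan is to prove the two inequalities in the standard Clément/Scott--Zhang framework, adapted so that the interpolant respects the homogeneous constraint on $CR_h(t_{n-1})$ inherited from $\mathbb{V}_c$. First I would fix the operator $\pi^c_h$ explicitly via nodal averages: set
\[
(\pi^c_h\varphi)(x_j)=\frac{1}{|\omega_j|}\int_{\omega_j}\varphi\,\dx
\quad\text{if } x_j\notin CR_h(t_{n-1}),\qquad (\pi^c_h\varphi)(x_j)=0 \text{ otherwise,}
\]
and extend by the nodal basis $\{\xi_j\}$. This definition uses no point evaluations, so it makes sense for $\varphi\in H^1(\Omega)$, reproduces constants on every patch $\omega_j$ disjoint from $CR_h(t_{n-1})$, and lands in the conforming subspace $\mathbb{V}_{c,h}$ by construction.

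For the volume estimate I would reduce to a patch argument. On each patch $\omega_{\tau_j}$ separated from $CR_h(t_{n-1})$, constants are preserved, so for the mean $\bar\varphi:=|\omega_{\tau_j}|^{-1}\int_{\omega_{\tau_j}}\varphi\,\dx$ one has $\varphi-\pi^c_h\varphi=(\varphi-\bar\varphi)-\pi^c_h(\varphi-\bar\varphi)$. A local $L^2$-stability bound $\|\pi^c_h w\|_{L^2(\tau_j)}\le C\|w\|_{L^2(\omega_{\tau_j})}$ follows from Cauchy--Schwarz applied to the defining averages, and a Poincaré inequality on the shape-regular patch gives $\|\varphi-\bar\varphi\|_{L^2(\omega_{\tau_j})}\le C h_{\tau_j}\|\nabla\varphi\|_{L^2(\omega_{\tau_j})}$. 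Combining these with the triangle inequality yields the case $m=0$. For $m=1$ I would apply an inverse estimate to the finite-element piece $\pi^c_h(\varphi-\bar\varphi)$ and bound $\nabla(\varphi-\bar\varphi)=\nabla\varphi$ directly. Passage to the reference simplex $\hat\tau$ via the affine map $F$ converts the mesh-independent constants on $\hat\tau$ into the declared $h^{1-m}_{\tau_j}$, with the geometric constants absorbed into $c_{41}$ through the shape-regularity constant $c_\varrho$.

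The edge estimate then follows from the scaled trace inequality
\[
\|w\|^2_{L^2(e_j)}\le C\bigl(h^{-1}_{\tau_j}\|w\|^2_{L^2(\tau_j)}+h_{\tau_j}\|\nabla w\|^2_{L^2(\tau_j)}\bigr),
\]
applied with $w=\varphi-\pi^c_h\varphi$ on a triangle $\tau_j\subset\omega_{e_j}$ adjacent to $e_j$. Inserting the $m=0$ and $m=1$ volume bounds already obtained produces the factor $h^{1/2}_{e_j}\|\nabla\varphi\|_{L^2(\omega_{e_j})}$, which is the claimed inequality after summing over the (at most two) adjacent triangles.

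The main obstacle I anticipate is the behaviour on patches that meet $CR_h(t_{n-1})$: there the modified interpolant no longer reproduces arbitrary constants, so the subtract-the-mean trick fails and must be replaced by a Friedrichs/Hardy-type argument exploiting $\varphi|_{CR_h(t_{n-1})}=0$ for $\varphi\in\mathbb{V}_c$. Concretely, on such a patch one subtracts $0$ instead of $\bar\varphi$ and bounds $\|\varphi\|_{L^2(\omega_{\tau_j})}\le C h_{\tau_j}\|\nabla\varphi\|_{L^2(\omega_{\tau_j})}$ using the vanishing trace on the portion of $\partial\omega_{\tau_j}$ lying in $CR_h(t_{n-1})$; the constant remains mesh-independent by scaling to $\hat\tau$. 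Once this boundary case is handled, the reference-element scaling and the trace inequality above combine as in the interior case to give both estimates with the stated powers of $h$.
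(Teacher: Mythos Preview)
The paper does not prove this lemma at all: it is stated with a bare citation to Verf\"urth (1999) and then used as a black box in the subsequent a~posteriori estimate. Your proposal is correct and is precisely the standard Cl\'ement/Scott--Zhang argument that the cited reference contains---local $L^2$-stability plus patchwise Poincar\'e for the $m=0$ volume bound, an inverse inequality for $m=1$, and the scaled trace inequality for the edge bound.

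The one point you add beyond the standard literature, and which the paper suppresses, is the modification near the constraint set $CR_h(t_{n-1})$: replacing mean-subtraction by a Friedrichs argument on patches where nodal values are forced to zero is the right idea. A small caveat worth flagging in a full write-up is the mismatch between $CR(t_{n-1})$ (where $\varphi\in\mathbb{V}_c$ actually vanishes) and the discrete set $CR_h(t_{n-1})$ (which determines where $\pi^c_h\varphi$ is forced to zero). For the Friedrichs step on a patch $\omega_{\tau_j}$ meeting $CR_h(t_{n-1})$ you need $\varphi$ itself to vanish on a nontrivial portion of that patch, so some compatibility between the continuous and discrete crack sets---e.g.\ $CR_h(t_{n-1})\subset CR(t_{n-1})$---is implicitly being assumed. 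The paper does not address this point either.
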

%%%%%%%%%%%%%%%%%%
In the following lemma, we state the gradient convergence of  a sequence $\{\nabla u_i^n\}_{i=1}^\infty$ in  $\nabla u$ in $L^2(\Omega)$ at each $t=t_n$ in the sense of maximum norm convergence of sequence $\{v_i^n\}_{i=0}^\infty$. The proof of the following lemma can be easily derived by utilizing the idealogy of \cite{FrancfortBourdi2000, HechtLozinski2018,Johnson2009,WheelerWickWollner2014}. We omit the proof of the following lemma. 
\begin{lemma} \label{ugdbound} %C([0,T_f]; H^2(\Omega)) \cap C^3([0,T_f]; H^{-1}(\Omega)
	Let $u \in\widehat{\mathbb{U}}(0,T_f)$ and $u_i^n \in \mathbb{U}^n_{g,h}$ be the solutions of continuous problem \cref{contwave}-\cref{contcond}  and the  discrete problem  \cref{minuh}, respectively.
	Then, there exists a positive constant $c_{43}>0$, independent of $h$ and $k$, such that
	\begin{align}
	\underset{0 \leq n \leq \tt{N}_T}{\max} \|\nabla (u(t_n) - u_i^n)\| \leq c_{43} \big( h + k + \underset{0 \leq n \leq \tt{N}_T}{\sup}  \|v(t_n) - v_i^n\|_\infty \big).  
	\end{align}
	In particular, if $h\to 0$, $k\to 0$, and $ \underset{0 \leq n \leq \tt{N}_T}{\sup} \|v(t_n) - v_i^n\|_\infty \to 0$, then
	\begin{align}
	\underset{0 \leq n \leq \tt{N}_T}{\max} \|\nabla (u(t_n) - u_i^n)\| \to 0.   
	\end{align}
\end{lemma}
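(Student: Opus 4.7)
The plan is to use the standard error-decomposition and discrete energy-argument framework for semi-discrete hyperbolic problems, adapted to the time-lagged semi-implicit scheme \cref{minuh}. First, I introduce a conforming projection $\Pi_h : \mathbb{U}_g \to \mathbb{U}^n_{g,h}$ (for instance the Ritz projection associated with $\mathcal{B}^n_h$, or a Scott--Zhang interpolant preserving the Dirichlet trace on $\Omega_D$) and decompose the error at each time level as $u(t_n) - u_i^n = \rho^n + \theta^n$ with $\rho^n := u(t_n) - \Pi_h u(t_n)$ and $\theta^n := \Pi_h u(t_n) - u_i^n \in \mathbb{U}^n_{d,h}$. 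Standard interpolation estimates, combined with the regularity $u \in \mbox{C}([0,T_f]; \mbox{H}^2(\Omega))$ inherited from $\widehat{\mathbb{U}}(0,T_f)$, yield $\|\nabla \rho^n\| \leq C h$ uniformly in $n$.

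Next, I derive the error equation for $\theta^n$ by subtracting \cref{minuh} from the continuous weak form \cref{minu} evaluated at $t=t_n$, tested with $\psi_h \in \mathbb{U}^n_{d,h}$. After adding and subtracting appropriate terms, this produces
\begin{align*}
\varrho\bigl(\delta^2 \theta^n, \psi_h\bigr) + \mathcal{B}^n_h(\ta_h^{n-1}; \theta^n, \psi_h) + \widehat{\mathcal{B}^n_h}(\ta_h^{n-1}; \theta^n, \psi_h) = \langle \mathcal{R}^n, \psi_h\rangle,
\end{align*}
where $\mathcal{R}^n$ collects three families of consistency residuals: (i) the time-truncation term $\varrho(u_{tt}(t_n) - \delta^2 u(t_n), \psi_h) = \mathcal{O}(k)$ obtained via Taylor expansion using $u \in \mbox{C}^2([0,T_f]; \mbox{L}^2(\Omega))$; (ii) projection residuals depending on $\rho^n$ and its discrete time differences, each of order $h$ by \Cref{4.3lemmaapr}; and (iii) the coefficient-consistency error $\bigl(\mu(\ta(t_n) - \ta_h^{n-1})\nabla u(t_n), \nabla \psi_h\bigr)$ together with its analogue for the Kelvin--Voigt term.

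The coefficient mismatch is the crucial step. Applying \Cref{prop3.2:v} and \Cref{prop4.2:vh} together with the elementary bound $|a^2 - b^2| \leq 2|a - b|$ for $a,b \in [0,1]$,
\begin{align*}
\|\ta(t_n) - \ta_h^{n-1}\|_{L^\infty(\Omega)} \leq 2(1-\kappa)\bigl(\|v(t_n) - v(t_{n-1})\|_{\infty} + \|v(t_{n-1}) - v_i^{n-1}\|_{\infty}\bigr),
\end{align*}
where the first term is of order $k$ via time-regularity of $v$ and the second is bounded by $\sup_m \|v(t_m) - v_i^m\|_\infty$. Testing the error equation with $\psi_h = \delta \theta^n$, the inertial term yields a telescoping difference of $\|\delta \theta^n\|^2$; the elastic term, via the identity $\nabla \theta^n \cdot \nabla \delta \theta^n = \frac{1}{2k_n}(|\nabla \theta^n|^2 - |\nabla \theta^{n-1}|^2 + |\nabla(\theta^n - \theta^{n-1})|^2)$ and the lower bound $\ta_h^{n-1}\geq \kappa$, produces a telescoping difference of $\|\nabla \theta^n\|^2$; and the Kelvin--Voigt damping delivers the coercive contribution $\eta\kappa\|\nabla \delta \theta^n\|^2$, which is pivotal for absorbing, via Young's inequality, the quadratic consistency term $\|\ta(t_n) - \ta_h^{n-1}\|_\infty^2\,\|\nabla u(t_n)\|^2$.

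Summing from $n=1$ to any $N \leq \mathtt{N}_T$, using the initial-data approximations $u_h^0 = \pi_h^0 u_0$ and $\tilde u_h^0 = \pi_h^0 u_1$, and invoking the discrete Gr\"onwall inequality delivers
\begin{align*}
\max_{0\leq n \leq \mathtt{N}_T}\|\nabla \theta^n\|^2 \leq C\Bigl(h^2 + k^2 + \sup_{0\leq n\leq \mathtt{N}_T}\|v(t_n) - v_i^n\|_\infty^2\Bigr),
\end{align*}
and the claimed estimate follows by combining with the interpolation bound on $\rho^n$; the convergence statement is then immediate. The principal obstacle is the time-lagged nonlinear coupling through $\ta_h^{n-1}$: because the coefficient at $t_{n-1}$ multiplies the solution update at $t_n$, the nonlinear consistency term does not admit an unconditional absorption in a pure wave scheme, and it is essential that the Kelvin--Voigt damping supplies coercivity on $\nabla \delta \theta^n$ in order to close the energy estimate without resorting to a CFL-type restriction.
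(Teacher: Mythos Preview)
The paper does not supply its own proof of this lemma: immediately before the statement it says the proof ``can be easily derived by utilizing the idealogy of'' four cited references and then explicitly omits it. There is therefore nothing in the paper to compare against beyond a pointer to standard a~priori error analyses for second-order hyperbolic problems with variable coefficients.

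Your outline is the canonical argument from that literature: split the error via a stable projection, derive the $\theta^n$-equation, test with $\delta\theta^n$, extract a discrete energy identity, control the coefficient-consistency residual using the Kelvin--Voigt dissipation, and close with the discrete Gr\"onwall lemma. Your observation that the damping term $\widehat{\mathcal{B}^n_h}$ is what permits the time-lagged coupling through $\ta_h^{n-1}$ to be absorbed unconditionally is exactly the key structural point, and it mirrors how the paper itself exploits the damping in the boundedness argument of \Cref{lemma4.6bddseq}.

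One small caveat: you invoke $\|v(t_n)-v(t_{n-1})\|_\infty = \mathcal{O}(k)$, but the space $\widehat{\mathbb{V}}(0,T_f)$ only provides $v\in W^1_\infty([0,T_f];L^2(\Omega))$, i.e.\ time-Lipschitz continuity in $L^2$ rather than in $L^\infty$. The usual remedy, implicit in the cited references, is to assume the slightly stronger regularity $v\in W^1_\infty([0,T_f];L^\infty(\Omega))$; you should make this explicit. This is a repairable detail, not a gap in the strategy.
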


For error estimation purposes, we define the notion of a jump for any $\phi_h \in \mathbb{V}_h$ across an internal element edge/face $e_{ij}\in \mathcal{E}_{int,h}$ shared by elements $\tau_i$ and $\tau_j$ ($i>j$), as follows
\begin{align*}
	&\sjump{\nabla \phi_h}:=~|\nabla \phi_h|_{e_{ij}\cap \partial\tau_i}-|\nabla \phi_h|_{e_{ij}\cap \partial\tau_j}, \quad  (\text{on interior edges of element})\\
	 \text{and} & \\
	 & \sjump{\nabla \phi_h}|_{e_{j}}=:~|\nabla \phi_h \cdot \textbf{n}|_{e_j},\;\; e_j \in \partial \tau_j \cap \partial \Omega, \quad  (\text{on boundary edges of element})
\end{align*}
where $\textbf{n}$ is the outer unit normal vector. 

In the following lemma, we derive the \textit{aposteriori-type} estimate which serves as the key point in our adaptive approach.
%%%%%%%%%%%%%%%%%%%%%%%%%%%%%%%%%%%%%%%%%%%%%%%%%%%%%%%%%%%%%%%%%%%%%%%%%%
\begin{lemma} \label{4.5jbddlemma}
	For all $\varphi \in \mathbb{V}_c$, we assume that $u^n_h\in \mathbb{U}^n_{g,h}$ is the solution of \cref{minuh}, and let $v^n_h\in \mathbb{V}^n_{c,h}$ be such that $(\mathcal{J}_h^n)^\prime(u^n_h; v_h^n)(\varphi_h) =0$ for all $\varphi_h \in \mathbb{V}^n_{c,h}$. Then there exists a constant $c_{44}>0$ such that the following estimate  
	\begin{align}
		|\mathcal{J}^\prime(u_h^n; v^n_h)(\varphi) | \leq~ c_{44}\,\big\{\mathpzc{R}_h \times\|\nabla \varphi\|\big\} \label{4.12lemmabdd} 
	\end{align}
	holds,  where $\mathpzc{R}_h$ is defined as $	\mathpzc{R}_h:=~\Big\{\sum_{ \tau_i\in \mathscr{T}_h^n}|\mathpzc{R}_{\tau}(u_h^n,v_h^n)|^2\Big\}^{\frac{1}{2}}$
%	\begin{align}
%		\mathpzc{R}_h:=~\Big\{\sum_{ \tau_i\in \mathscr{T}_h^n}|\mathpzc{R}_{\tau}(u_h^n,v_h^n)|^2\Big\}^{\frac{1}{2}}
%	\end{align}
	with
		\begin{align} 
			|\mathpzc{R}_{\tau}(u_h^n,v_h^n)|^2 &:=~ 
			\int_{\tau_i} h^2_{\tau_i}\,\big|\mu\,(1-\kappa)\,|\nabla u_h^n|^2\,v^n_h -\nu \big|^2\,\dx +\sum_{e_i\in \partial \tau_i\cap \mathcal{E}_h} \int_{e_i}\rho^2 \,h_{e_i}\,|\sjump{\nabla v_h^n}|^2\,\ds \label{esteta4.11bb}
		\end{align}
	where $c_{44}=\max\{c_{41},\, c_{42}\}$. 
\end{lemma}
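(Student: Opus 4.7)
The plan is to interpret the bound as a standard residual-type a posteriori estimate. I will exploit the fact that $\mathcal{J}'$ and $(\mathcal{J}_h^n)'$ are given by the \emph{same} integral expression, the only difference being the admissible test space. Thus, writing $\pi_h^c \varphi \in \mathbb{V}_{c,h}^n$ for the quasi-interpolant of $\varphi \in \mathbb{V}_c$ supplied by Lemma~4.3, I can subtract the discrete critical-point identity $(\mathcal{J}_h^n)'(u_h^n;v_h^n)(\pi_h^c\varphi)=0$ to obtain the Galerkin-type representation
\begin{equation*}
\mathcal{J}'(u_h^n;v_h^n)(\varphi)=\int_{\Omega}\Big[\rho\,\nabla v_h^n\!\cdot\!\nabla(\varphi-\pi_h^c\varphi)-\nu(\varphi-\pi_h^c\varphi)+\mu(1-\kappa)\,v_h^n(\varphi-\pi_h^c\varphi)|\nabla u_h^n|^2\Big]\dx.
\end{equation*}

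Next, I would split the integral over elements $\tau_i \in \mathscr{T}_h^n$ and integrate the $\rho$-term by parts on each element. Because $v_h^n$ is piecewise linear, the elementwise Laplacian $\Delta v_h^n$ vanishes identically, so the only contribution left is a boundary term $\int_{\partial \tau_i}\rho\,(\nabla v_h^n\!\cdot\!\mathbf{n})(\varphi-\pi_h^c\varphi)\,\ds$. Summing over all elements and reorganising the edge contributions produces the usual jump terms $\sjump{\nabla v_h^n}$ on interior edges and the normal-flux terms on boundary edges, so that
\begin{equation*}
\mathcal{J}'(u_h^n;v_h^n)(\varphi)=\sum_{\tau_i\in \mathscr{T}_h^n}\int_{\tau_i}\bigl[\mu(1-\kappa)|\nabla u_h^n|^2 v_h^n-\nu\bigr](\varphi-\pi_h^c\varphi)\,\dx+\sum_{e_i\in \mathcal{E}_h}\int_{e_i}\rho\,\sjump{\nabla v_h^n}(\varphi-\pi_h^c\varphi)\,\ds.
\end{equation*}

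Then I would apply Cauchy--Schwarz on each element and each edge, insert the weights $h_{\tau_i}$ and $h_{e_i}^{1/2}$ in the standard way, and invoke the quasi-interpolation estimates of Lemma~4.3 to absorb the factors $h_{\tau_i}^{-1}\|\varphi-\pi_h^c\varphi\|_{L^2(\tau_i)}$ and $h_{e_i}^{-1/2}\|\varphi-\pi_h^c\varphi\|_{L^2(e_i)}$ by $\|\nabla\varphi\|_{L^2(\omega_{\tau_i})}$ and $\|\nabla\varphi\|_{L^2(\omega_{e_i})}$, with constants $c_{41}$ and $c_{42}$. A final discrete Cauchy--Schwarz over elements and edges, combined with the finite overlap of the patches $\omega_{\tau_i},\omega_{e_i}$ implied by shape-regularity, collapses the right-hand side into $c_{44}\,\mathpzc{R}_h\,\|\nabla\varphi\|$ with $c_{44}=\max\{c_{41},c_{42}\}$ (up to a harmless overlap constant that can be absorbed).

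The only delicate point I anticipate is ensuring that $\pi_h^c\varphi$ actually lies in $\mathbb{V}_{c,h}^n$, i.e.\ that the quasi-interpolant preserves the vanishing condition on $CR_h(t_{n-1})$, so that the Galerkin subtraction is legitimate; this requires the quasi-interpolation operator to respect the discrete crack set. A secondary bookkeeping issue is to confirm that the boundary edges in $\mathcal{E}_{D,h}\cup\mathcal{E}_{N,h}$ are handled consistently under the jump convention stated just before the lemma ($\sjump{\nabla \phi_h}|_{e_j}=\nabla\phi_h\!\cdot\!\mathbf{n}$), so that the edge sum genuinely runs over all of $\mathcal{E}_h$ without spurious terms. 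Once these admissibility issues are in place, the remainder of the argument is purely mechanical.
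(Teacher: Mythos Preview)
Your proposal is correct and follows essentially the same route as the paper: subtract the discrete criticality identity, split over elements, integrate the $\rho$-term by parts (the element Laplacian of the piecewise-linear $v_h^n$ vanishes), then apply Cauchy--Schwarz together with the quasi-interpolation bounds of Lemma~4.3 with $\varphi_h=\pi_h^c\varphi$. The paper keeps $\varphi_h$ generic until the last step rather than fixing it to $\pi_h^c\varphi$ at the outset, and it does not explicitly address the two admissibility points you flag (that $\pi_h^c\varphi\in\mathbb{V}^n_{c,h}$ and the boundary-edge bookkeeping), so your write-up is, if anything, slightly more careful.
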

\begin{proof}
	For every $\varphi \in \mathbb{V}_c$ and choosing an arbitrary $\varphi_h \in \mathbb{V}^n_{c,h}$, the equation \cref{3.18jprim} with \Cref{def4.1}, implying that 
	\begin{align}
		\big| \mathcal{J}^\prime(u^n_h; v^n_h)(\varphi)\big| &=~
		\Big|\sum_{\tau_i\in \mathscr{T}_h}\int_{\tau_i} [\mu\,(1-\kappa)\,|\nabla u_h^n|^2\,v^n_h -\nu]\,(\varphi-\varphi_h) \, \dx\nonumber\\
		&\hspace{0.5cm}+\sum_{ \tau_i\in \mathscr{T}_h}\int_{\tau_i} \rho\, \nabla v_h^n \cdot \nabla (\varphi - \varphi_h) \, \dx \Big| \nonumber\\ 
		&\leq \Big|\sum_{ \tau_i\in \mathscr{T}_h}\int_{\tau_i} [\mu\,(1-\kappa)\,|\nabla u_h^n|^2 \,v^n_h -\nu]\,(\varphi-\varphi_h)\,\dx\Big|\nonumber\\
		&\hspace{0.5cm}+\Big|\sum_{\tau_i\in \mathscr{T}_h}\int_{\partial \tau_i} \rho\, \nabla v_h^n \cdot \textbf{n}\; (\varphi - \varphi_h) \, \dx \Big| \nonumber\\ 
		& \leq \sum_{\tau_i\in \mathscr{T}_h} \int_{\tau_i}\Big|\mu\,(1-\kappa)\,|\nabla u_h^n|^2\,v^n_h -\nu\Big| \;|\varphi-\varphi_h|\,\dx\nonumber\\
		&\hspace{0.5cm}+\rho\;\sum_{ \tau_i\in \mathscr{T}_h} \sum_{ e_i\in \partial \tau_i\cap \mathcal{E}_h} \big|\sjump{\nabla v_h^n}|\;|\varphi - \varphi_h \big|\, \ds. \nonumber
%		& \leq \Big\{\sum_{\tau_i\in \mathscr{T}_h} \int_{\tau_i}\Big|\mu\,(1-\kappa)\,|\nabla u_h^n|^2\,v^n_h -\nu\Big|^2\,\dx\Big\}^{\frac{1}{2}} \Big\{\sum_{\tau_i\in \mathscr{T}_h} \|\varphi-\varphi_h\|^2_{L^2(\tau_i)}\Big\}^{\frac{1}{2}}\nonumber\\
%		&\hspace{0.1cm}+\Big\{\sum_{ \tau_i\in \mathscr{T}_h} \sum_{ e_i\in \partial \tau_i\cap \mathcal{E}_h}\rho^2\, |\sjump{\nabla v_h^n}|^2\,\ds\Big\}^{\frac{1}{2}}\Big\{\sum_{ \tau_i\in \mathscr{T}_h} \sum_{ e_i\in \partial \tau_i\cap \mathcal{E}_h} \|\varphi - \varphi_h\|^2_{L^2(e_i)}\Big\}^{\frac{1}{2}}. \nonumber
	\end{align}
	Setting $\varphi_h=\pi_h^c \varphi$, then use of \Cref{4.10bbapr} of  \Cref{4.3lemmaapr} with $m=0$ and the Cauchy-Schwarz inequality leads to 
	\begin{align}
		&\big| \mathcal{J}^\prime(u^n_h; v^n_h)(\varphi)\big|\leq \max\{c_{41},\, c_{42}\} \times \Big\{\sum_{ \tau_i\in \mathscr{T}_h} \Big[\int_{\tau_i} h^2_{\tau_i}\,\Big|\mu\,(1-\kappa)\,|\nabla u_h^n|^2\,v^n_h -\nu\Big|^2\,\dx\nonumber\\
		&\hspace{0.5cm}+ \sum_{ e_i\in \partial \tau_i\cap \mathcal{E}_h} \int_{e_i} \rho^2 \,h_{e_i}\,|\sjump{\nabla v^n_h}|^2\,\ds \Big] \Big\}^{\frac{1}{2}} \Big\{\sum_{ \tau_i\in \mathscr{T}_h}\Big[ \|\nabla \varphi\|^2_{\omega_{\tau_i}}+ \sum_{ e_i\in \partial \tau_i \cap \mathcal{E}_h} \|\nabla \varphi\|^2_{\omega_{e_i}} \Big] \Big\}^{\frac{1}{2}}.\label{4.21BBbdd}
	\end{align}
	Setting $c_{44}=\max\{c_{41},\, c_{42}\}$, 
	this leads to the desired inequality \cref{4.12lemmabdd}. 
\end{proof}
%%%%%%%%%%%%%%%%%%%%%%%%%%%%%%%%%%%%%%
Our next step is to establish the bound of $\mathcal{J}^\prime(u^n_h, v^n_h)$ in the dual norm of $ \mathbb{V}_c$, which is presented in the following lemma. %\mathbb{U}_{d}\times
\begin{lemma}
	Assume that all conditions of \Cref{4.5jbddlemma} hold. Then, 
	we have 
	\begin{align}
		\|\mathcal{J}^\prime(u_h^n; v_h^n)\|_{\overset{\ast}{\mathbb{V}_c}}~\leq ~ c_{44}\,\Big\{\sum_{ \tau_i\in \mathscr{T}_h^n}|\mathpzc{R}_{\tau}(u_h^n,v_h^n)|^2\Big\}^{\frac{1}{2}},
	\end{align} 
	where the constant $c_{44}$ is defined in  \Cref{4.5jbddlemma}.
\end{lemma}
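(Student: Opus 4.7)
The plan is to prove this as a direct corollary of \Cref{4.5jbddlemma} by taking the supremum over admissible test functions in the definition of the dual norm. Since \Cref{4.5jbddlemma} already provides the pointwise (in $\varphi$) estimate
\[
|\mathcal{J}^\prime(u_h^n; v^n_h)(\varphi)| \leq c_{44}\,\mathpzc{R}_h \,\|\nabla \varphi\|, \qquad \forall \varphi \in \mathbb{V}_c,
\]
with $\mathpzc{R}_h$ built from the local indicators $\mathpzc{R}_\tau$, the statement to be proved is essentially a repackaging of this inequality in dual-norm form.

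First I would recall the dual-norm convention being used: on $\mathbb{V}_c$ the natural seminorm (which becomes a norm after the homogeneous Dirichlet condition on the crack set $CR(t_{n-1})$ is imposed and a Poincar\'e-type inequality is applied) is $\|\nabla \cdot\|_{L^2(\Omega)}$, so that
\[
\|\mathcal{J}^\prime(u_h^n; v_h^n)\|_{\overset{\ast}{\mathbb{V}_c}}
:= \sup_{\varphi \in \mathbb{V}_c\setminus\{0\}} \frac{|\mathcal{J}^\prime(u_h^n; v_h^n)(\varphi)|}{\|\nabla \varphi\|}.
\]
With this convention in place, the bound follows by dividing the inequality of \Cref{4.5jbddlemma} by $\|\nabla \varphi\|>0$ and taking the supremum over all nonzero $\varphi \in \mathbb{V}_c$, which yields exactly
\[
\|\mathcal{J}^\prime(u_h^n; v_h^n)\|_{\overset{\ast}{\mathbb{V}_c}} \leq c_{44}\, \Big\{\sum_{\tau_i \in \mathscr{T}_h^n} |\mathpzc{R}_{\tau}(u_h^n,v_h^n)|^2 \Big\}^{1/2}.
\]

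The only non-cosmetic step is the transition, inside the proof of \Cref{4.5jbddlemma}, from the patch-wise sum
$\sum_{\tau_i} \bigl( \|\nabla \varphi\|^2_{\omega_{\tau_i}} + \sum_{e_i\subset\partial\tau_i\cap\mathcal{E}_h} \|\nabla \varphi\|^2_{\omega_{e_i}}\bigr)$
to the global quantity $\|\nabla \varphi\|^2_{L^2(\Omega)}$; this uses only the finite-overlap property of the element/edge patches under the shape regularity hypothesis on $\mathscr{T}_h^n$, which absorbs a mesh-independent multiplicative constant into $c_{44}$. Since this is already implicit in the statement of \Cref{4.5jbddlemma}, I do not anticipate any genuine obstacle: the present lemma is essentially a one-line consequence of taking the supremum. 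In the write-up I would simply note that $\mathbb{V}_c$-valued test functions are admissible in \Cref{4.5jbddlemma} and conclude by the definition of the dual norm.
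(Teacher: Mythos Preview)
Your proposal is correct and follows essentially the same one-line approach as the paper: invoke the bound from \Cref{4.5jbddlemma}, divide through, and take the supremum over $\varphi\in\mathbb{V}_c$. The only cosmetic difference is that the paper uses the full $\|\varphi\|_{\mathbb{V}}$ in the denominator of the dual norm rather than the seminorm $\|\nabla\varphi\|$; since $\|\nabla\varphi\|\le\|\varphi\|_{\mathbb{V}}$, the paper's version follows immediately from yours, and both are controlled by the same right-hand side of \Cref{4.5jbddlemma}.
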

\begin{proof}
	From the inequality \cref{4.12lemmabdd} and using the definition of dual norm to have
	\begin{align}
		\|\mathcal{J}^\prime(u^n_h; v^n_h)\|_{\overset{\ast}{\mathbb{V}_c}}=~\sup_{ \varphi \in \mathbb{V}_c} \frac{~|\mathcal{J}^\prime(u^n_h; v^n_h)(\varphi)|~}{ ~\|\varphi\|_{\mathbb{V}}}
		\leq ~ c_{44}\, \Big\{\sum_{ \tau_i\in \mathscr{T}_h^n}|\mathpzc{R}_{\tau}(u_h^n,v_h^n)|^2\Big\}^{\frac{1}{2}},
	\end{align} 
	where the indicators $\mathpzc{R}_{\tau}(u_h^n,v_h^n)$ is defined as in  \Cref{4.5jbddlemma}, this yields the desired result. 
\end{proof}
With assuming the decreasing tolerance $\Xi_{RF}$, the sequences generated via  adaptive \Cref{adapalg} will ultimately approach to a critical point of $\mathcal{J}(\cdot, \cdot)$, regardless of the initial conditions applied. The main result is stated in the following theorem.
%%%%%%%%%%%%%%%%%%%%
\begin{theorem}[Main results] \label{Cngwithtol}
	Let $\Omega\subset \mathbb{R}^d$ be an open bounded domain. Further, we assume that there exists a sequence $\{(u_i^n,v_i^n)\}_{i=1}^{\infty}$ in $\mathbb{U}_g\times \mathbb{V}_c$ with $v^n_i(x)\in [0, 1]$ for $a.e.\; x\in \Omega$, and for some $\Upsilon^n_i$  with $\Upsilon^n_i \rightarrow 0$ as $i \rightarrow \infty$, such that %and $\tilde{\Upsilon}^n_i$ ,\; \tilde{\Upsilon}^n_i 
	\begin{align}
	\mathcal{J}^\prime(u_i^n; v_i^n)(\varphi) \leq~ \Upsilon_i^n\; \| \varphi\|_{\mathbb{V}}, \quad  \; \forall n\in[1:\mathtt{N}_T]\label{4.30abinq} 
	\end{align}
	for all  $\varphi \in \mathbb{V}_c^\infty$. Again, we assume that the sequence $\{(u_i^n,v_i^n)\}_{i=1}^{\infty}$ is a bounded sequence in $\mathbb{U}\times \mathbb{V}$. Then, there exists a subsequence of $\{(u^n_i,v^n_i)\}_{i=1}^{\infty}$ (not-relabeled) and $(u, v)\in \widehat{\mathbb{U}}(0,T_f) \times \widehat{\mathbb{V}}(0,T_f)$ with $v(x,t)\in [0, 1]$  $a.e.\; x\in \Omega$, $t\in [0,T_f]$ such that $u_i^n$ and $v_i^n$ strongly converge to $u$ and $v$ as $i \rightarrow \infty$ in $\mathbb{U}$ and $\mathbb{V},$ respectively. Additionally, $u$ and $v$ satisfy 
	\begin{align}\label{4.6criticalpts}  
		\mathcal{J}^\prime(u; v)(\varphi)=~0  \quad \; \forall\,   \varphi \in \mathbb{V}_c^\infty. 
	\end{align}
	Hence, the function $\mathcal{J}(\ast;\cdot)$ has a critical point $v\in \mathbb{V}_c^\infty$, where $u \in \widehat{\mathbb{U}}(0,T_f)$ satisfies \cref{minu}--\cref{bdry}.
\end{theorem}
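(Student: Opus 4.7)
My plan is to combine compactness with a test-function argument driven by the approximate Euler--Lagrange inequality \cref{4.30abinq}, and then to upgrade the weak $\mathbb{V}$-limit of $v_i^n$ to a strong limit by plugging $v_i^n - v$ itself into the same inequality. For clarity, I write $u^n := u(t_n)$ and $v^n := v(t_n)$ throughout, and I note that the hypothesis $\Upsilon_i^n\to 0$ is exactly the input one obtains from \Cref{4.5jbddlemma} once the adaptive algorithm drives the indicator $\mathpzc{R}_h$ below $\Xi_{RF}\to 0$.

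The first step is compactness. Since $\mathbb{U}$ and $\mathbb{V}$ are Hilbert and the given sequence is bounded, I extract (not relabeled) $u_i^n \rightharpoonup u^n$ in $\mathbb{U}$ and $v_i^n \rightharpoonup v^n$ in $\mathbb{V}$. The Rellich--Kondrachov embedding $H^1(\Omega)\hookrightarrow\hookrightarrow L^p(\Omega)$ for every $p<\infty$, combined with a diagonal extraction, strengthens the second to $v_i^n \to v^n$ strongly in every $L^p(\Omega)$ and pointwise almost everywhere; the constraint $v_i^n(x)\in[0,1]$ then transfers to $v^n(x)\in[0,1]$ a.e., and $\{v_i^n\}$ is uniformly bounded in $L^\infty(\Omega)$. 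For the gradients of $u_i^n$, I proceed inductively in $n$ using \Cref{ugdbound}: once $v_i^{n-1}\to v^{n-1}$ is available, dominated convergence on the coefficient $(1-\kappa)(v_i^{n-1})^2+\kappa$ (uniformly bounded in $L^\infty$ and convergent in every $L^p$) lets me pass to the limit in the discrete wave equation \cref{minuh} and conclude $\nabla u_i^n\to \nabla u^n$ strongly in $L^2(\Omega)$.

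Next I pass to the limit in \cref{4.30abinq}. Since the inequality applies to both $\pm\varphi$, it yields $|\mathcal{J}'(u_i^n; v_i^n)(\varphi)|\leq \Upsilon_i^n\|\varphi\|_{\mathbb{V}}\to 0$ for every $\varphi \in \mathbb{V}_c^\infty$. Using the explicit form \cref{3.18jprim}, the diffusive piece $\rho\int\nabla v_i^n\cdot\nabla\varphi$ converges by weak $H^1$-convergence, $-\nu\int\varphi$ is constant, and the coupling piece is handled by the algebraic splitting
\begin{equation*}
 v_i^n \varphi |\nabla u_i^n|^2 - v^n \varphi |\nabla u^n|^2
 = v_i^n \varphi \bigl(|\nabla u_i^n|^2 - |\nabla u^n|^2\bigr)
 + (v_i^n - v^n)\varphi |\nabla u^n|^2,
\end{equation*}
in which the first summand vanishes in $L^1$ from strong $L^2$-convergence of $\nabla u_i^n$ via $\bigl\||\nabla u_i^n|^2-|\nabla u^n|^2\bigr\|_{L^1}\leq \|\nabla(u_i^n-u^n)\|\,\|\nabla(u_i^n+u^n)\|$ together with the uniform $L^\infty$-bound on $v_i^n\varphi$, and the second summand vanishes by dominated convergence. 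This yields the critical-point identity \cref{4.6criticalpts}.

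Finally, to upgrade weak to strong $\mathbb{V}$-convergence of $v_i^n$, I test \cref{4.30abinq} with $\varphi = v_i^n - v^n \in \mathbb{V}_c^\infty$, a legitimate choice since $\mathbb{V}_c^\infty$ is a vector space, obtaining
\begin{equation*}
 \rho\int\nabla v_i^n\cdot\nabla(v_i^n - v^n)
 - \nu\int(v_i^n - v^n)
 + \mu(1-\kappa)\int v_i^n(v_i^n - v^n)|\nabla u_i^n|^2 \longrightarrow 0,
\end{equation*}
because $\|v_i^n - v^n\|_{\mathbb{V}}$ is bounded and $\Upsilon_i^n\to 0$. The last two summands go to zero by strong $L^2$-convergence of $v_i^n$ together with the uniform $L^1$ bound on $|\nabla u_i^n|^2$, whence the first summand combined with weak convergence $\int\nabla v_i^n\cdot\nabla v^n\to \|\nabla v^n\|^2$ forces $\|\nabla v_i^n\|\to \|\nabla v^n\|$, and weak plus norm convergence in a Hilbert space gives strong convergence. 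The chief obstacle throughout is the gradient convergence of $u_i^n$ in the compactness step: unlike the phase field it cannot be controlled by a single elliptic energy identity but must propagate through the discrete wave system, and the failure of $H^1\hookrightarrow L^\infty$ in two dimensions makes the $L^\infty$-style estimate behind \Cref{ugdbound} a genuine technical hurdle rather than a cosmetic one.
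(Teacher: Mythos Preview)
Your proposal is correct and follows essentially the same route as the paper: weak compactness in $\mathbb{U}\times\mathbb{V}$, Rellich to upgrade $v_i^n$ to strong $L^p$, \Cref{ugdbound} for strong convergence of $\nabla u_i^n$, the same three-term splitting of $\mathcal{J}'(u;v)(\varphi)$ (your two coupling pieces are the paper's $\mathscr{R}_i^n$ and part of $\mathscr{P}_i^n$), and finally testing with $\varphi=v_i^n-v^n$ to upgrade $\nabla v_i^n$ from weak to strong; your ``weak plus norm convergence'' conclusion is just a repackaging of the paper's direct estimate of $\rho\|\nabla v_i^n-\nabla v\|^2$. Your closing remark about the $L^\infty$ gap in applying \Cref{ugdbound} is well taken and is in fact glossed over in the paper's own argument.
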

%%%%%%%%%%%%%%%%%%%%%%%%%%%%%
In the following lemma, we will state and prove the basic properties of sequences obtained using the adaptive \Cref{adapalg}. 
\begin{lemma}\label{lemma4.6bddseq}
	Let $\{(u_i^n,v_i^n)\}_{i=1}^\infty$ be a sequence generated via adaptive \Cref{adapalg} such that $\{(u^n_i,v^n_i)\}_{i=1}^\infty \subseteq \mathbb{U}^n_{g,h_i}\times \mathbb{V}^n_{c,h_i}$ at time level $t_n$, then the sequence $\{(u^n_i,v^n_i)\}_{i=1}^\infty$ holds the following properties
	\begin{enumerate}
		\item[\tt (i)]{\tt Pointwise boundedness of $v^n_i$:} $0 \leq v^n_i(x) \leq 1$ on $\Omega$ for all $i \in \mathbb{N}$,
		\item[\tt (ii)]{\tt Sequence boundedness:} the sequence $\{(u^n_i,v^n_i)\}_{i=1}^\infty$ is bounded in $\mathbb{U}\times \mathbb{V}$.
	\end{enumerate}   
\end{lemma}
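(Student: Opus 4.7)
The two assertions admit rather different arguments, and I would treat them separately but in the same conceptual order.

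\textbf{For part (i) (pointwise boundedness).} This is essentially a direct consequence of \Cref{prop4.2:vh}. At each outer iteration $i$ of \Cref{adapalg}, the crack field $v_i^n$ is produced as an (inner) minimizer of the strictly convex (in $v$) functional $\mathcal{J}_h^n(u_i^n; \cdot)$ over $\mathbb{V}^n_{c,h_i}$, so in particular it is a critical point in the sense of \Cref{def4.1}. \Cref{prop4.2:vh} then yields $0 \le v_i^n(x) \le 1$ pointwise. Thus (i) requires no new work beyond noting that the algorithm's output at every refinement level is indeed a critical point.

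\textbf{For part (ii), boundedness of $v_i^n$ in $\mathbb{V}$.} Part (i) already gives $\|v_i^n\|_{L^2(\Omega)}^2 \le |\Omega|$, so the task is to control $\|\nabla v_i^n\|_{L^2(\Omega)}$ uniformly in $i$. I would test the discrete optimality condition \cref{ddofJbfem}, $(\mathcal{J}_h^n)'(u_i^n;v_i^n)(\varphi_h)=0$, with the admissible choice $\varphi_h = v_i^n$ itself (which lies in $\mathbb{V}^n_{c,h_i}$ since $v_i^n$ vanishes on the discrete crack set by construction). Rearranging gives
\begin{equation*}
\rho\int_{\Omega}|\nabla v_i^n|^2\,\dx + \mu(1-\kappa)\int_{\Omega}(v_i^n)^2|\nabla u_i^n|^2\,\dx = \nu\int_{\Omega} v_i^n\,\dx,
\end{equation*}
and dropping the nonnegative second term on the left yields $\rho\|\nabla v_i^n\|^2 \le \nu|\Omega|^{1/2}\|v_i^n\|_{L^2} \le \nu|\Omega|$, a bound independent of $i$.

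\textbf{For part (ii), boundedness of $u_i^n$ in $\mathbb{U}$.} This is where I expect the main effort. I would proceed by a discrete energy estimate for \cref{minuh}, iteratively in the time index $m=1,2,\dots,n$. Testing \cref{minuh} with $\psi_h = \delta u_h^m$ on the discrete system at level $m$ and using the identity $2(\delta^2 u_h^m,\delta u_h^m) k_m = \|\delta u_h^m\|^2-\|\delta u_h^{m-1}\|^2 + \|\delta u_h^m - \delta u_h^{m-1}\|^2$, together with the analogous identity for $\mathcal{B}_h^m$, produces a discrete energy balance. The key structural fact is the uniform lower bound $\ta_h^{m-1} \ge \kappa > 0$, which gives a coercivity constant independent of the mesh and of $i$. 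Combining this with the non-negativity of the viscous dissipation contributed by $\widehat{\mathcal{B}_h^m}$ and with a discrete Gr\"onwall argument, I would obtain
\begin{equation*}
\max_{1\le m\le n}\Bigl(\|\delta u_h^m\|^2 + \kappa\mu\|\nabla u_h^m\|^2\Bigr) \le C\bigl(\|u_0\|_{H^1},\|u_1\|_{L^2},\|f\|_{L^2(L^2)},t_n\bigr),
\end{equation*}
with $C$ independent of the refinement level $i$ (recall $\ta_h^{m-1}\le 1$ gives a matching upper bound for the continuity side). This bound, combined with the already-established $H^1$ bound on $v_i^n$, yields boundedness of $(u_i^n,v_i^n)$ in $\mathbb{U}\times\mathbb{V}$.

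\textbf{The principal obstacle} I foresee is the genuine $i$-independence of the energy estimate for $u_i^n$: the coefficient $\ta_h^{m-1}$ in \cref{minuh} depends on $v_{h_i}^{m-1}$ at the current refinement level, so a careful bookkeeping (using either a uniform projection operator $\pi_h^0$ for the initial data and an inductive argument on $m$, or the uniform upper bound $\ta_h^{m-1}\le 1$ in the continuity direction and the uniform lower bound $\ta_h^{m-1}\ge\kappa$ in the coercivity direction) is needed to avoid constants that blow up with $i$. Once this is controlled, the bound on $v_i^n$ follows with no additional difficulty from the test-function argument above.
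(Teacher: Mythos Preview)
Your arguments for part (i) and for the $\mathbb{V}$-bound on $v_i^n$ match the paper exactly: invoke \Cref{prop4.2:vh} for the pointwise bound, and test $(\mathcal{J}_h^n)'(u_i^n;v_i^n)(\cdot)=0$ with $\varphi_h=v_i^n$, then drop the nonnegative term, for the gradient bound.

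For the $\mathbb{U}$-bound on $u_i^n$, however, you take a different and cruder route than the paper. Your plan---test \cref{minuh} with $\delta u_h^m$, use the uniform bounds $\kappa\le\ta_h^{m-1}\le 1$, and close by a discrete Gr\"onwall argument---does not handle the time-varying coefficient cleanly: the polarization identity for $\mathcal{B}_h^m$ leaves a coefficient-mismatch term $\frac{\mu}{2}\int_\Omega(\ta_h^{m-1}-\ta_h^m)|\nabla u_h^m|^2\,\dx$ that fails to telescope, and absorbing it via the crude bound $|\ta_h^{m-1}-\ta_h^m|\le 1-\kappa$ produces a recursion with growth factor $1/\kappa$ per time step. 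That would still give an $i$-independent bound (so the lemma would follow), but the constant is of order $\kappa^{-n}$, which for $\kappa\ll 1$ is enormous. The paper instead exploits a coupling trick you do not mention: it additionally tests the \emph{phase-field} optimality condition with the time increment of $v$, i.e.\ $(\mathcal{J}_h^n)'(u_i^n;v_i^n)(k_n\delta v_i^n)=0$, and observes that this identity converts the mismatch term exactly into the telescoping surface-energy increment $\frac{\lambda_c}{c_w}[\mathcal{H}(v_i^n)-\mathcal{H}(v_i^{n-1})]$ plus nonnegative remainders (see \cref{eq4.12bd}--\cref{eq4.14bd}). Summing over $n$ then gives the clean bound \cref{eq4.21bdgradu} with no Gr\"onwall and no $\kappa^{-n}$ degradation. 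Testing one equation with the time-increment of the \emph{other} unknown is the key device missing from your outline; it is what buys the paper a constant that is uniform in the number of time steps, not merely uniform in the mesh index $i$.
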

\begin{proof}
	Property (i) is a direct consequence of Proposition \Cref{prop4.2:vh}.
	To prove property (ii), we show the boundedness of the sequences one by one in their respective spaces. \smallskip 
	
	\noindent
	\textbf{\tt Boundedness of $u^n_i$ in $\mathbb{U}$:}
	Substitute $u_h^n:=u_i^n$ and $v_h^n:=v_i^n$ into \cref{minuh}, and set $\psi_h= k_n\delta u_i^n$, we get
	\begin{align}
		&\varrho\, \big(\delta^2 u_i^n, k_n\delta u_i^n \big)+ \mathcal{B}^n_h(\ta_i^{n-1};u_i^n,  k_n\delta u_i^n )+\widehat{\mathcal{B}^n_h}(\ta_i^{n-1};u_i^n, k_n\delta u_i^n)=\big(f_i^n, k_n \delta u_i^n). \label{eq4.9bd}
	\end{align}
	It is easy to figure out the equivalent form of the first quantity on the left side of equation \cref{eq4.9bd}, as 
	\begin{align}
		&\varrho\, \big(\delta^2 u_i^n, k_n\delta u_i^n \big)=\frac{\varrho}{2}\big[\|\delta u^n_i\|^2-\|\delta u^{n-1}_i\|^2+k_n^2\|\delta^2 u^n_i\|^2\big].  \label{eq4.10bd}
	\end{align}
	Next, we find the equivalent form of  $\mathcal{B}^n_h(\ta_i^{n-1};u_i^n,  k_n\delta u_i^n )+\widehat{\mathcal{B}^n_h}(\ta_i^{n-1};u_i^n, k_n\delta u_i^n)$ of equation \cref{eq4.9bd}. In order to do this, we first evaluate the term
	\begin{align}
		\mathcal{B}^n_h(\ta_i^{n-1};u_i^n,  k_n\delta u_i^n )&=\big(\mu\,\ta_i^{n-1}\,\nabla u_i^n, \nabla  u_i^n \big)-\big(\mu\,\ta_i^{n-1}\,\nabla u_i^n, \nabla u_i^{n-1} \big)\nonumber\\
		&=\frac{\mu}{2}\big[\|(\ta_i^n)^{\frac{1}{2}}\nabla u_i^n\|^2-\|(\ta_i^{n-1})^{\frac{1}{2}}\nabla u_i^{n-1}\|^2+k_n^2\|(\ta_i^{n-1})^{\frac{1}{2}}\nabla \delta u_i^n\|^2\big]\nonumber\\
		&~~~- \frac{\mu}{2}\|(\ta_i^n-\ta_i^{n-1})^{\frac{1}{2}}\nabla u_i^n\|^2. \label{eq4.11bd}
	\end{align}
	Calculating the last term $-\frac{\mu}{2}\|(\ta_i^n-\ta_i^{n-1})^{\frac{1}{2}}\nabla u_i^n\|^2$ of the right-hand side of equation \cref{eq4.11bd} will yield 
	\begin{align}
		-\frac{\mu}{2}\|(\ta_i^n-\ta_i^{n-1})^{\frac{1}{2}}\nabla u_i^n\|^2&=-(1-\kappa)\,\mu\, k_n \int_{\Omega}v_i^n\, \delta v_i^n \, |\nabla u_i^n|^2\, \dx\nonumber\\
		&~~~+\frac{\mu}{2}\,(1-\kappa)\, k_n^2\, \|(\delta v_i^n )\,\nabla u_i^n\|^2. \label{eq4.12bd}
	\end{align}
	We now compute the first term of the right-hand side of equation \cref{eq4.12bd}. An implication of the fact ${(\mathcal{J}_h^n)}^{\prime}(u_i^n;v_i^n)(k_n\delta v_i^n)=0$ leads to
	\begin{align}
		&-  \mu\,(1-\kappa)\,k_n\,\int_{\Omega}v^n_i \delta v_i^n\, |\nabla u_i^n|^2 \, \dx=\int_{\Omega}\Big[\rho\,\nabla v_i^n \cdot \nabla (k_n\delta v_i^n) - \nu\,k_n\delta v_i^n \Big] \, \dx \nonumber\\
		&~~~=\frac{2\,\lambda_c}{c_w}\int_{\Omega}\Big[\frac{1}{2}\Big\{\epsilon\,|\nabla v_i^n |^2+ \frac{(1-v^n_i)}{\epsilon}\,\Big\}-\frac{1}{2}\Big\{\epsilon\,|\nabla v_i^{n-1} |^2+ \frac{(1-v^{n-1}_i)}{\epsilon}\,\Big\}\nonumber\\
		&~~~~~~~~~~~+\frac{\epsilon}{2}\Big\{|\nabla v_i^n |^2+|\nabla v_i^{n-1} |^2-2\,\nabla v_i^n\cdot \nabla v_i^{n-1}\Big\}\Big] \, \dx\nonumber\\
		&~~~=\frac{\lambda_c}{c_w}\,\big[ \mathcal{H}(v_i^n)-\mathcal{H}(v_i^{n-1})\big]+\frac{\lambda_c\,k_n^2\, \epsilon}{c_w}\,\|\nabla \delta v_i^n\|^2. \nonumber
	\end{align}
	Incorporating this in equation \cref{eq4.12bd}, we reach at 
	\begin{align}
		-\frac{\mu}{2}\|(\ta_i^n-\ta_i^{n-1})^{\frac{1}{2}}\nabla u_i^n\|^2&=\frac{\lambda_c}{c_w}\,\big[\mathcal{H}(v_i^n)-\mathcal{H}(v_i^{n-1})\big]+\frac{\lambda_c\,k_n^2\, \epsilon}{c_w}\,\|\nabla \delta v_i^n\|^2\nonumber\\
		&~~~+\frac{\mu}{2}\,(1-\kappa)\, k_n^2\, \|(\delta v_i^n )\,\nabla u_i^n\|^2. \label{eq4.13bd}  
	\end{align}
	Combining \cref{eq4.11bd} and \cref{eq4.13bd}, to have
	\begin{align}
		\mathcal{B}^n_h(\ta_i^{n-1};u_i^n,  k_n\delta u_i^n )&=\frac{\mu}{2}\big[\|(\ta_i^n)^{\frac{1}{2}}\nabla u_i^n\|^2-\|(\ta_i^{n-1})^{\frac{1}{2}}\nabla u_i^{n-1}\|^2+k_n^2\|(\ta_i^{n-1})^{\frac{1}{2}}\nabla \delta u_i^n\|^2\big]\nonumber\\
		&~~~+\frac{\lambda_c\,k_n^2\, \epsilon}{c_w}\,\|\nabla \delta v_i^n\|^2+ \frac{\lambda_c}{c_w}\,\big[\mathcal{H}(v_i^n)-\mathcal{H}(v_i^{n-1})\big]\nonumber\\
		&~~~+\frac{\mu}{2}\,(1-\kappa)\, k_n^2\, \|(\delta v_i^n )\,\nabla u_i^n\|^2. \label{eq4.14bd}
	\end{align}
	The next step is to look at the dissipation term, which is equivalent to
	\begin{align}
		\widehat{\mathcal{B}^n_h}(\ta_i^{n-1};u_i^n, k_n\delta u_i^n)= \eta\,k_n\|(\ta_i^{n-1})^{\frac{1}{2}}\, \nabla \delta u_i^n\|^2. \label{eq4.15bd}
	\end{align}
	Thus, bringing equations \cref{eq4.14bd} and \cref{eq4.15bd} together produces
	\begin{align}
		&\mathcal{B}^n_h(\ta_i^{n-1};u_i^n,  k_n\delta u_i^n )+\widehat{\mathcal{B}^n_h}(\ta_i^{n-1};u_i^n, k_n\delta u_i^n)=\frac{\mu}{2}\big[\|(\ta_i^n)^{\frac{1}{2}}\nabla u_i^n\|^2-\|(\ta_i^{n-1})^{\frac{1}{2}}\nabla u_i^{n-1}\|^2\nonumber\\
		&\hspace{2.5cm}+k_n^2\|(\ta_i^{n-1})^{\frac{1}{2}}\nabla \delta u_i^n\|^2\big]+ \frac{\lambda_c}{c_w}\,\big[ \mathcal{H}(v_i^n)- \mathcal{H}(v_i^{n-1})\big]+\frac{\lambda_c\,k_n^2\,\epsilon}{c_w} \|\nabla \delta v_i^n\|^2 \nonumber\\
		&\hspace{2.5cm}+\frac{\mu}{2}\,(1-\kappa)\, k_n^2\, \|(\delta v_i^n )\,\nabla u_i^n\|^2+ \eta\,k_n\|(\ta_i^{n-1})^{\frac{1}{2}}\, \nabla \delta u_i^n\|^2. \label{eq4.16bd}
	\end{align}
	Incorporating the values from equations \cref{eq4.10bd} and \cref{eq4.16bd} in the left-hand side of equation \cref{eq4.9bd} and then using the weighted Cauchy-Schwartz inequality leads to
	\begin{align}
		&\frac{\varrho}{2}\big[\|\delta u^n_i\|^2-\|\delta u^{n-1}_i\|^2+k_n^2\|\delta^2 u^n_i\|^2\big]+\frac{\mu}{2}\big[\|(\ta_i^n)^{\frac{1}{2}}\nabla u_i^n\|^2-\|(\ta_i^{n-1})^{\frac{1}{2}}\nabla u_i^{n-1}\|^2\nonumber\\
		&~~~~+k_n^2\|(\ta_i^{n-1})^{\frac{1}{2}}\nabla \delta u_i^n\|^2\big]+ \frac{\lambda_c}{c_w}\,\big[ \mathcal{H}(v_i^n)- \mathcal{H}(v_i^{n-1})\big]+\frac{\lambda_c\,k_n^2\,\epsilon}{c_w} \|\nabla \delta v_i^n\|^2\nonumber\\
		&~~~~+\frac{\mu}{2}\,(1-\kappa)\, k_n^2\, \|(\delta v_i^n )\,\nabla u_i^n\|^2+ \eta\,k_n\|(\ta_i^{n-1})^{\frac{1}{2}}\, \nabla \delta u_i^n\|^2 \nonumber\\ 
		&~~~\leq \frac{k_n}{2\,\eta\,\kappa}\,\|f^n_i\|^2_{-1} +\frac{k_n\,\eta\,\kappa}{2}\,\|\nabla \delta u_i^n\|^2.  \label{eq4.17bd}
	\end{align}
	Using the facts  $\kappa \leq \ta_i^{n-1} \leq 1$ and  $\|f^n_i\|_{-1} \leq \, c\,\|f^n_i\|^2$,  one may easily achieve 
	\begin{align}
		&\frac{\varrho}{2}\big[\|\delta u^n_i\|^2-\|\delta u^{n-1}_i\|^2+k_n^2\|\delta^2 u^n_i\|^2\big]+\frac{\mu}{2}\big[\|(\ta_i^n)^{\frac{1}{2}}\nabla u_i^n\|^2-\|(\ta_i^{n-1})^{\frac{1}{2}}\nabla u_i^{n-1}\|^2\nonumber\\
		&~~~~+k_n^2\|(\ta_i^{n-1})^{\frac{1}{2}}\nabla \delta u_i^n\|^2\big]+ \frac{\lambda_c}{c_w}\,\big[ \mathcal{H}(v_i^n)- \mathcal{H}(v_i^{n-1})\big]+\frac{\lambda_c\,k_n^2\,\epsilon}{c_w} \|\nabla \delta v_i^n\|^2\nonumber\\
		&~~~~+\frac{\mu}{2}\,(1-\kappa)\, k_n^2\, \|(\delta v_i^n )\,\nabla u_i^n\|^2+ \frac{k_n\,\eta\,\kappa}{2} \,\|\nabla \delta u_i^n\|^2 \leq \frac{k_n\, c^2}{2\,\eta\,\kappa}\,\|f^n_i\|^2.  
	\end{align}
	Summing over $1$ to $m$, $m\in [1:\tt{N}_T]$, we get
	\begin{align}
		&\big[\frac{\varrho}{2}\|\delta u^{m}_i\|^2+\frac{\mu}{2}\|(\ta_i^{m})^{\frac{1}{2}}\nabla u_i^{m}\|^2+ \frac{\lambda_c}{c_w}\, \mathcal{H}(v_i^{m})\big]+\sum_{n=1}^{m}k_n^2\Big[\frac{\varrho}{2}\,\|\delta^2 u^n_i\|^2  
		\nonumber\\
		&~+ \frac{\mu}{2}\,\|(\ta_i^{n-1})^{\frac{1}{2}}\nabla \delta u_i^n\|^2+\frac{\lambda_c\,\epsilon}{c_w}\,\|\nabla \delta v_i^n\|^2 +\frac{\mu\,(1-\kappa)}{2}\,\|(\delta v_i^n )\,\nabla u_i^n\|^2\Big]\nonumber\\
		&~+ \frac{\eta\,\kappa}{2}\,\sum_{n=1}^{m}k_n\,\|\nabla \delta u_i^n\|^2\leq \sum_{n=1}^{m}\frac{k_n\,c^2}{2\,\eta\,\kappa}\,\|f^n_i\|^2+\big[\frac{\varrho}{2}\|\delta u^{0}_i\|^2+\frac{\mu}{2} \|(\ta_i^0)^{\frac{1}{2}}\nabla u_i^{0}\|^2+\lambda_c\, \mathcal{H}(v_i^{0}) \big].  
	\end{align}
	Setting the discrete dissipation term with one summand over $k_n$ as
	\begin{align}
		\mathcal{D}_i^n&:=k_n\Big[\frac{\varrho}{2}\,\|\delta^2 u^n_i\|^2  
		+ \frac{\mu}{2}\,\|(\ta_i^{n-1})^{\frac{1}{2}}\nabla \delta u_i^n\|^2+\frac{\lambda_c\,\epsilon}{c_w}\,\|\nabla \delta v_i^n\|^2 \nonumber\\
		&~~~~~+\frac{\mu\,(1-\kappa)}{2}\,\|(\delta v_i^n )\,\nabla u_i^n\|^2\Big]+ \frac{\eta\,\kappa}{2}\,\|\nabla \delta u_i^n\|^2,\nonumber 
	\end{align}
	and hence
	\begin{align}
		&\frac{1}{2}\,\underset{n\in [1: \tt{N}_T]}{\max}\,\big[\varrho\,\|\delta u^{n}_i\|^2+\kappa\,\mu\,\|\nabla u_i^{n}\|^2+ 2\,\nu\,\|v_i^{n}\|^2+\rho\,\|\nabla v_i^{n}\|^2\big]+\sum_{n=1}^{\tt{N}_T}k_n\,\mathcal{D}_{i}^n\nonumber\\
		&~~~~~~~~~~~\leq c_{45} \times \Big[\sum_{n=1}^{\tt{N}_T}\|f^n_i\|^2+\|u_1\|^2+\|u_{0}\|^2_1 \Big],  \label{eq4.21bdgradu}
	\end{align}
since initially $(v_i^0)$ is chosen to be $1$, and the constant $ c_{45}=\max\{ \frac{\tilde{c}\,\,c^2}{2\,\eta\,\kappa}\,\frac{\mu}{2}, \,\frac{\varrho}{2}\}$, $\tilde{c}=\max_{n\in [1:~\tt{N_T}]}k_n$.
	This implies that the sequence $\{\|\nabla u^n_i\|\}_{i=1}^\infty$ is bounded. 
	
	Next, we show that the sequence $\{\|u^n_i\|\}_{i=1}^\infty$ is bounded in $\mathbb{U}$.  Then, 
	we apply Friedrichs' inequality and using \cref{eq4.21bdgradu} to obtain
	\begin{align}
		\|u^n_i\|&\leq~\|u^n_i-g\|+\|g\|\leq~c_F\;\|\nabla(u^n_i-g)\|+\|g\|\nonumber\\
		&\leq~c_F\; \|\nabla u^n_i\|+(c_F^2+1)^{1/2}\,\big(\|\nabla g\|^2+\|g\|^2\big)^{1/2}\nonumber\\
		&\leq~c_{46}\times \Big[\sum_{n=1}^{\tt{N}_T}\|f^n_i\|^2+\|u_1\|^2+\|u_{0}\|^2_1 +\|g\|^2_{\mathbb{V}}\Big]^{\frac{1}{2}}\nonumber
	\end{align}
with constant $c_{46}= \big(\frac{4\,c_F^2\;c_{45}^2}{\kappa^2\,\mu^2}+c_F^2+1 \big)^{1/2}$.	This demonstrates that the sequence $\{ u_i^n \}_{i=1}^\infty$ is bounded in $\mathbb{U}$. \\ \smallskip
	
	\noindent
	\textbf{\tt Boundedness of $v^n_i$ in $\mathbb{V}$:} For $t\in j_n, \, n\in[1:{\tt N_T}]$, to demonstrate the boundedness of the sequence $\{v^n_i\}_{i=1}^\infty$ in $\mathbb{V}$, we proceed by setting $\varphi_h=v_i^{n} \in \mathbb{V}^n_{c,h_i}$ in $(\mathcal{J}^n_{h_i})^\prime(u_i^n;v_i^n)(\varphi_h)=~0$, to obtain the following.
	\begin{align}
		& \int_{\Omega}\rho\, |\nabla v^{n}_{i} |^2\, \dx+ \underaccent{ {\bf \geq 0}}{\underbrace{\int_{\Omega}\mu\,(1-\kappa)\,|\nabla u^n_{i}|^2\;(v^{n}_{i})^2\, \dx}}~=~\int_{\Omega}\nu \,v^{n}_{i}\, \dx. \nonumber 
	\end{align}
	Exploiting property (i) of Lemma \ref{lemma4.6bddseq}, this leads to 
%	\begin{align}
%		\int_{\Omega} |\nabla v^{n}_{i} |^2\, \dx ~\leq~  \frac{\nu \times \meas(\Omega)}{\rho}.\nonumber
%	\end{align}
%	and hence   
	\begin{align}
		\|\nabla v^{n}_{i} \|\,\leq~ c_{47}
	\end{align}
	with $c_{47}= \frac{\nu}{\rho}\times \meas(\Omega)$. The boundedness of the sequence $\{\|\nabla v^n_i\|\}_{i=1}^\infty$ and the pointwise boundedness of $\{ v^n_i \}_{i=1}^\infty$ in the $\mbox{L}^2$-norm implies that the sequence $\{ v^n_i \}_{i=1}^\infty$ is bounded in $\mathbb{V}$. We have thus established that the sequence $\{(u^n_i, v^n_i)\}_{i=1}^\infty$ is bounded in $\mathbb{U}\times \mathbb{V}$, which concludes the proof of the lemma.
\end{proof}
We are in the position to establish the proof main results of \Cref{Cngwithtol} with the help of  \Cref{ugdbound}.
\begin{proof}[Proof of \Cref{Cngwithtol}]  The proof is divided into two parts. In the first part, we show that the existence sequences converge to $u$ and $v$, while the second step shows that $v$ admits as a critical point of $\mathcal{J}(\ast;\cdot)$, with extra steps added as needed to make things clearer. \smallskip
	
\noindent
\textbf{\tt Step-1:} We first establish the existence of a convergent subsequence of $\{(u_i,v_i)\}_{i=1}^{\infty}$ in $\mathbb{U}_g\times \mathbb{V}_c$. By  \Cref{lemma4.6bddseq}, the sequence $\{(u^n_i,v^n_i)\}_{i=1}^{\infty}$ is bounded in $\mathbb{U}\times \mathbb{V}$. Since $\mathbb{U}$ is a Hilbert space, boundedness implies the existence of a weakly convergent subsequence. Specifically, there exists a subsequence (not relabeled) such that $(u^n_i,v^n_i) \overset{w}{\longrightarrow} (u,v)$ as $i\rightarrow \infty$ in $\mathbb{U}\times \mathbb{V}$. In particular, since $\mathbb{U}_g$ is a closed and convex subset of $\mathbb{U}$, it is also weakly closed. Consequently, the weak limit $u$ is an element of $\mathbb{U}$.  
	
	Let us define a subset $\widehat{\mathbb{V}}$ of $\mathbb{V}$ by  $$\widehat{\mathbb{V}}:=\{\hat{v} \in \mathbb{V}_c :~  0\leq \hat{v}(x)\leq 1~~ a.e. ~~ x \in \Omega\}.$$
	One can easily see that $\widehat{\mathbb{V}}$ is a closed and convex subset of $\mathbb{V}$. Since $v^n_i\in \widehat{\mathbb{V}},\, \forall\, i\in \mathcal{N}$, hence, $0\leq v(x)\leq 1,\; a.e., \, x\in \Omega$. However, the compact embedding $\mbox{H}^1(\Omega) \hookrightarrow_c \mbox{L}^2(\Omega)$ ensures that the sequence $\{(u_i^n,v_i^n)\}_{i=1}^{\infty}$ converges strongly to $(u,v)$ in $\mbox{L}^2(\Omega)\times \mbox{L}^2(\Omega)$.
	
	The following step shows that the phase field variable $v$ satisfies $\mathcal{J}(u; v)(\varphi)=~0,$ $   \forall \,  \varphi \in \mathbb{V}_c^\infty$, where $u$ is the solution of the problem \cref{minu}--\cref{bdry}. \smallskip
	
	\noindent 
	\textbf{\tt Step-2:} Considering equation \cref{3.18jprim}, we figure out
	\begin{align}
		\mathcal{J}^\prime(u; v)(\varphi)&=~\int_{\Omega}\big[\rho\, \nabla v \cdot \nabla \varphi -\nu\, \varphi +\mu\,(1-\kappa)\,v\, \varphi\,|\nabla u|^2\big] \, \dx\nonumber\\
		&=~\int_{\Omega}\Big[\rho\, \nabla (v-v_i^n) \cdot \nabla \varphi+ \mu\,(1-\kappa)\,|\nabla u|^2 \,(v-v_i^n)\,\varphi \Big]\,\dx\nonumber\\
		&~~~~+ \int_{\Omega}\Big[\rho\, \nabla v_i^n \cdot \nabla \varphi - \nu \, \varphi + \mu\,(1-\kappa)\,|\nabla u_i^n|^2\,\,v_i^n\,\varphi \Big]\,\dx\nonumber\\
		&~~~~+\int_{\Omega} \mu\,(1-\kappa)\,\big(|\nabla u|^2-|\nabla u_i^n|^2\big)\,v_i^n\,\varphi \, \dx\nonumber\\
		&:=~\mathscr{P}_i^n+\mathscr{Q}_i^n+\mathscr{R}_i^n. \label{barxyzbb} 
	\end{align}
	In order to complete the proof, we need to show that $\mathscr{P}_i^n,\, \mathscr{Q}_i^n,$ and $\mathscr{R}_i^n$ vanish as $i$ tends to infinity. Each of these convergences will be examined independently.\smallskip
	
	\noindent
	\textbf{(i)} 
	We begin by analyzing the first term $\mathscr{P}_i^n$. We consider
	\begin{align}
		|\mathscr{P}_i^n|&= \Big|\int_{\Omega}\Big[\rho\, \nabla (v-v_i^n) \cdot \nabla \varphi+ \mu\,(1-\kappa)\,|\nabla u|^2\,(v-v_i^n)\,\varphi \Big]\,\dx\Big| \nonumber\\
		&\leq  \rho\, \Big|\int_{\Omega} \nabla (v-v_i^n) \cdot \nabla \varphi\, \dx \Big|+ \Big| \int_{\Omega} \mu\,(1-\kappa)\,|\nabla u|^2\, (v-v_i^n)\,\varphi \, \dx \Big| \nonumber\\
		%&~~~+\nu\, \Big| \int_{\Omega} \,(v-v_i^n)\,\varphi \, \dx \Big|\nonumber\\
		&\leq  \rho\, \Big|\int_{\Omega} \nabla (v-v_i^n) \cdot \nabla \varphi\, \dx \Big|+ \mu\,|1-\kappa|\,\|\varphi\|_{L^\infty(\Omega)}\,\Big| \int_{\Omega} |\nabla u|^2\, (v-v_i^n)\, \dx \Big|. \label{4.20tildexi}
		%&~~~ +\nu\, \|v-v_i^n\|_{L^2(\Omega)}\,\|\varphi\|_{L^2(\Omega)}.  
	\end{align}
	Since, $\nabla v_i^n \overset{w}{\longrightarrow} \nabla v$  in $(L^2(\Omega))^d$ and $v_i^n \longrightarrow  v$ in $L^2(\Omega)$. 
	%%%%%%%%%%%%%%%%%
	It is sufficient to demonstrate that as $i$ goes to infinity, the expression $\int_{\Omega} |v-v_i^n|\; |\nabla u|^2\,\dx$ vanishes.
	
	To proceed, we assume that the sequence $\{v_i^n \}_{i=1}^\infty$ has a convergent subsequence $\{ v^n_{i_k} \}_{k=1}^\infty$, such that $ v^n_{i_k} \longrightarrow v$ {\it a.e.} in $\Omega$. Additionally, we suppose that this subsequence fulfills, for fixed $\psi\in \mathbb{U}$,
	$$\lim_{k \rightarrow \infty} \int_{\Omega} |v-v^n_{i_k}|\;|\nabla \psi|^2\,\dx=\limsup_{i \rightarrow \infty} \int_{\Omega} |v-v_i^n|\;|\nabla \psi|^2\,\dx.$$
	Applying the Dominated Convergence theorem, we arrive at
	$$\lim_{k \rightarrow \infty} \int_{\Omega} |v- v^n_{i_k}|\;|\nabla \psi|^2\,\dx=0,$$
	and, hence
	$$\limsup_{i \rightarrow \infty} \int_{\Omega} |v-v^n_i|\;|\nabla \psi|^2\,\dx=0.$$
	Thus we  have
	\begin{align}
		\int_{\Omega} |v-v^n_i|\;|\nabla \psi|^2\,\dx=0,\quad \text{i} \rightarrow \infty. \label{4.27dc}
	\end{align}
	Using the fact from equation \cref{4.27dc} with $\psi=u$, equation \cref{4.20tildexi} concludes that $|\mathscr{P}^n_i|\longrightarrow 0$ as $i$ tends to $\infty$, and therefore $\mathscr{P}^n_i$ vanished.\smallskip
	
	Next, we show the convergence of the sequence $\mathscr{Q}^n_i$. \smallskip
	
	%%%%%%%%%%%%%%%%
	\noindent
	\textbf{(ii)} 
	It is clear that the expression \cref{4.30abinq} can control the sequence $\mathscr{Q}^n_i$. Specifically,
	$$|\mathscr{Q}^n_i|=\big|\mathcal{J}'(u_i^n;v_i^n)(\varphi)\big|\leq~ \Upsilon^n_i\,\|\varphi\|_{\mathbb{V}}.$$ 
	As $i \rightarrow \infty$, we conclude that $|\mathscr{Q}^n_i| \longrightarrow 0$, since  $\Upsilon^n_i \longrightarrow 0$. Thus, we obtain $\mathscr{Q}^n_i\longrightarrow 0$. \smallskip
	
	Finally, we concentrate on proving the convergence of $\mathscr{R}^n_i$.  \smallskip
	
	\noindent
	\textbf{(iii) \tt Convergence of sequence $\mathscr{R}^n_i$.} Consider 
	\begin{align}
		|\mathscr{R}^n_i|&=\Big|\int_{\Omega} \mu\,(1-\kappa)\,\big(|\nabla u|^2-|\nabla u_i^n|^2\big)\,v_i^n\,\varphi \, \dx\Big| \nonumber \\
		& \leq \mu\,(1-\kappa)\, \underset{x\in \Omega}{\sup}\,|\varphi|\; \|\nabla u+ \nabla u_i^n\|\,\|\nabla u- \nabla u_i^n\|. \label{4.28gu}
	\end{align}
	Since $0\leq v_i^n\leq 1$. Note that $\{ \|\nabla u+ \nabla u_i^n\| \}_{i=1}^\infty$ is a bounded sequence; then, invoking \Cref{ugdbound} with the fact that $v_i^n$ converges to $v$ as $i\to \infty$, one can easily obtain that $|\mathscr{R}^n_i| \to \infty$. 
	
	Bringing all the convergence results together $\mathscr{P}^n_i,\, \mathscr{Q}^n_i,$ and $\mathscr{R}^n_i$, we conclude that $\mathcal{J}'(u; v)(\varphi)=0$ for all $\varphi\in \mathbb{V}^\infty_c$. \smallskip
	
	To complete our analysis, we must show the strong convergence of the sequence $\{\nabla v_i^n\}_{i=1}^\infty$ in $(L^2(\Omega))^d$, which will be dealt with next.\smallskip
	
	\noindent 
	\textbf{\tt (a)} To prove the strong convergence of $\{\nabla v_i^n\}_{i=1}^\infty$, we consider
	\begin{align}
		&\rho\, \|\nabla v_i^n-\nabla v\|^2 \leq~\int_{\Omega}  \big(\mu\, (1-\kappa)\,|\nabla u|^2+\nu\, \big)\,|v_i^n-v|^2\,\dx \nonumber\\
		&~~~~~~~+\int_{\Omega} \rho\, (\nabla v_i^n-\nabla v)\cdot  (\nabla v_i^n-\nabla v)\,\dx \nonumber\\
		&~~~~~~= \int_{\Omega} \big[\mu\,(1-\kappa)\,|\nabla u|^2\,v_i^n\,(v_i^n-v)+\nu\,v_i^n\,(v_i^n-v)+ \rho\,\nabla v_i^n\cdot  (\nabla v_i^n-\nabla v)\big]\,\dx \nonumber\\ 
		&~~~~~~~~-\int_{\Omega}\big[\mu\,(1-\kappa)\,|\nabla u|^2\,v\,(v_i^n-v) +\nu\,v\,(v_i^n-v)+ \rho\,\nabla v \cdot  (\nabla v_i^n-\nabla v)\big]\,\dx \nonumber\\
		&~~~~~~= \mathcal{J}(u_i^n;v_i^n)(v_i^n-v)-\mathcal{J}(u;v)(v_i^n-v)\nonumber\\
		&~~~~~~~~+\int_{\Omega}  \mu\,(1-\kappa)\,\big(|\nabla u|^2-|\nabla u_i^n|^2\big)\,v_i^n\,(v_i^n-v)\, \dx. \nonumber
 	\end{align}
	Utilizing the fact that  $\mathcal{J}^\prime(u;v)(v_i^n-v)=0$, for $\varphi \in \mathbb{V}_c^\infty$, in conjunction with \cref{4.30abinq}  and  \cref{4.37gradu}, we derive
	\begin{align}
		\rho\, \|\nabla v_i^n-\nabla v\|^2& \leq~ 
		\big|\mathcal{J}^\prime(u_i^n;v_i^n)(v_i^n-v)\big|+2\,\mu\,|1-\kappa\,\|\nabla u- \nabla u_i^n\| \,  \|\nabla u+\nabla u_i^n\| \nonumber\\
		&\leq~  \Upsilon_i\,\|\nabla(v_i^n-v)\|+ 2\, \mu\,\big|1-\kappa \big|\,\|\nabla u+\nabla u_i^n\| \,  \|\nabla u- \nabla u_i^n\|,
		\label{4.37gradu}
	\end{align}
	since $0\leq v,~ v_i^n\leq 1$ and the term $\{\|\nabla u+\nabla u_i^n\|_{i=1}^{\infty}$ is bounded, hence
	\begin{align}
		\|\nabla v_i^n-\nabla v\|&\leq \frac{1}{\rho}\, \Big[\Upsilon_i+2\, \mu\,\big|1-\kappa\big|\, \|\nabla u+\nabla u_i^n\|\,  \|\nabla u- \nabla u_i^n\| \Big]. \label{4.29999bd}
	\end{align}
	Note that right-hand side of equation \cref{4.29999bd} vanishes as  $\nabla u_i^n\longrightarrow \nabla u$ and  $\Upsilon_i \longrightarrow 0$ as $i \rightarrow \infty$. Thus, we conclude that $\{\nabla v_i^n\}_{i=1}^\infty$ converges strongly to $\nabla v$ in $(L^2(\Omega))^d$. Hence, the proof is completed.
\end{proof}
% %%%%%%%%%%%%%%%%%%%%%%%%%%%%%%%%%%%%%%%%%%%
\section{Computational results}\label{expres}
This section numerically validates our theory and demonstrates the efficacy of the proposed adaptive algorithm. 
Simulations were performed using a custom C++ framework built on the \textsf{deal.II} library \cite{arndt2021deal}, extending our previous work \cite{fernando2025b,manohar2025adaptive,manohar2025convergence}.
Key parameters include: regularization $\kappa = 10^{-10}$, irreversibility tolerance $10^{-2}$, critical energy release rate $\lambda_c=1.0$, and wave speed $c_w=8/3$.  The length scale is coupled to the local mesh size via $\varepsilon=5h_f$, which defines the density $\rho=10\sqrt{h_f}$ and viscosity $\nu = (10\sqrt{h_f})^{-1}$. At each time step, a D\"{o}rfler marking strategy refines $20\%$ of cells with the highest error and coarsens $5\%$ with the lowest.  We use an iterative staggered scheme where the inner displacement and phase-field solve converge when the $L^{\infty}$-norm of successive iterates is below $10^{-10}$.  The simulation runs for 1600 time steps, allowing the wave to traverse the domain. 
We analyze the final phase-field state ($v$) and the corresponding system energies.
\begin{exam} \label{exm1}
	We consider a rectangular domain $\Omega=[0,3]\times [0,3]$ with a slit $[0, 1.5] \times \{1.5\}$ which is shown in  \Cref{crackdomain}.  	The incremental anti-plane displacement function $g(x,t)$ with $x=(x_1,x_2)$, is given by 
	\begin{align*}
		g(x,t)=g_0(t)\,\mathrm{sgn}(x_2), \quad \text{where} \quad g_0(t)=
		\begin{cases}
			\dfrac{\varepsilon_v\,t^2}{2\,t_s}   \quad  \text{if} \quad  t \in [0, t_s], \\
			\varepsilon_v\,t -\dfrac{\varepsilon_v\,t_s}{2}  \quad  ~\text{if} \quad t \in [t_s, T_g], 
		\end{cases}
	\end{align*}
	where $\varepsilon_v=0.9$, while $t_s$ and $T_g$ are arbitrary chosen threshold function such that $t_s \ll T_g$.
	\begin{figure}[htp] 
		\centering
		\begin{tikzpicture}[scale=1.1]
			\tikzset{myptr/.style={decoration={markings,mark=at position 1 with {\arrow[scale=3,>=stealth]{>}}},postaction={decorate}}}
			\filldraw[draw=black, thick] (0,0) -- (6,0) -- (6,4) -- (0,4) -- (0,0);
			\shade[inner color=gray!10, outer color=cyan!40] (0,0) -- (6,0) -- (6,4) -- (0,4) -- (0,0);
			\node at (-0.35,-0.35)  {$(0,0)$};
			\node at (6.24,-0.35)   {$(3,0)$};
			\node at (-0.35,4.35)   {$(0,3)$};
			\node at (6.25, 4.35)   {$(3,3)$};
			\node at (6.4, -0.1)[anchor=west, rotate=90]{$\Gamma_2:~ \mathbf{n}\cdot \nabla u=0 ~\&~ \mathbf{n}\cdot \nabla v=0$};
			\node at (3, -0.35) {$\Gamma_{1}:~ \mathbf{n}\cdot \nabla u=0  ~\&~ \mathbf{n}\cdot \nabla v=0$};
			\node at (-0.65, 3.8)[anchor=east, rotate=90]{$u=-g_0(t) ~~~~~~ u=g_0(t)$};
			\node at (-1.1, 3.5)[anchor=east, rotate=90]{$ \Gamma_{4}~ \& ~\Gamma_5:~\mathbf{n}\cdot \nabla v=0 $};
			\node at (3, 4.35) {$\Gamma_{3}:~\mathbf{n}\cdot \nabla u=0 ~~\& ~~ \mathbf{n}\cdot \nabla v=0 $};
			\draw [myptr](-0.4, 1.8)--(-0.4, 0);
			\draw [myptr](-0.4, 2.2)--(-0.4, 4);
			\node at (-0.2, 3.4)[anchor=east, rotate=90]{$ \Gamma_{4}$};
			\node at (-0.2, 1.4)[anchor=east, rotate=90]{$ \Gamma_{5}$};
			\draw [line width=0.5mm, black!80]  (0,2.0) -- (2,2.0);
			\node at (2.6,1.95)[anchor=east]{$\Gamma_c$};
			\node at (1.85,1.7)[anchor=east]{$\mathbf{n}\cdot \nabla v =0$};
			\node at (1.85,2.3)[anchor=east]{$\mathbf{n}\cdot \nabla u =0$};
			\def\xOrigin{7.5}
			\def\yOrigin{0.5} 
			% Draw x-y axes
			\draw[->, color=red, thick] (\xOrigin, \yOrigin) -- (\xOrigin + 1, \yOrigin) node[right] {$x$};
			\draw[->, color=blue, thick] (\xOrigin, \yOrigin) -- (\xOrigin, \yOrigin + 1) node[above] {$y$};
		\end{tikzpicture}
		\caption{A domain and the boundary indicators.}
		\label{crackdomain}
	\end{figure}
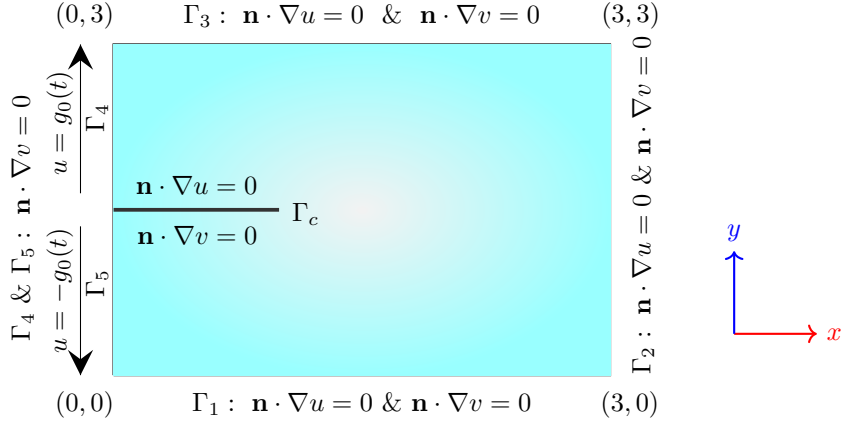
\end{exam}
\begin{figure}
	\centering
	\begin{subfigure}{0.45\textwidth}
		\includegraphics[width=\linewidth]{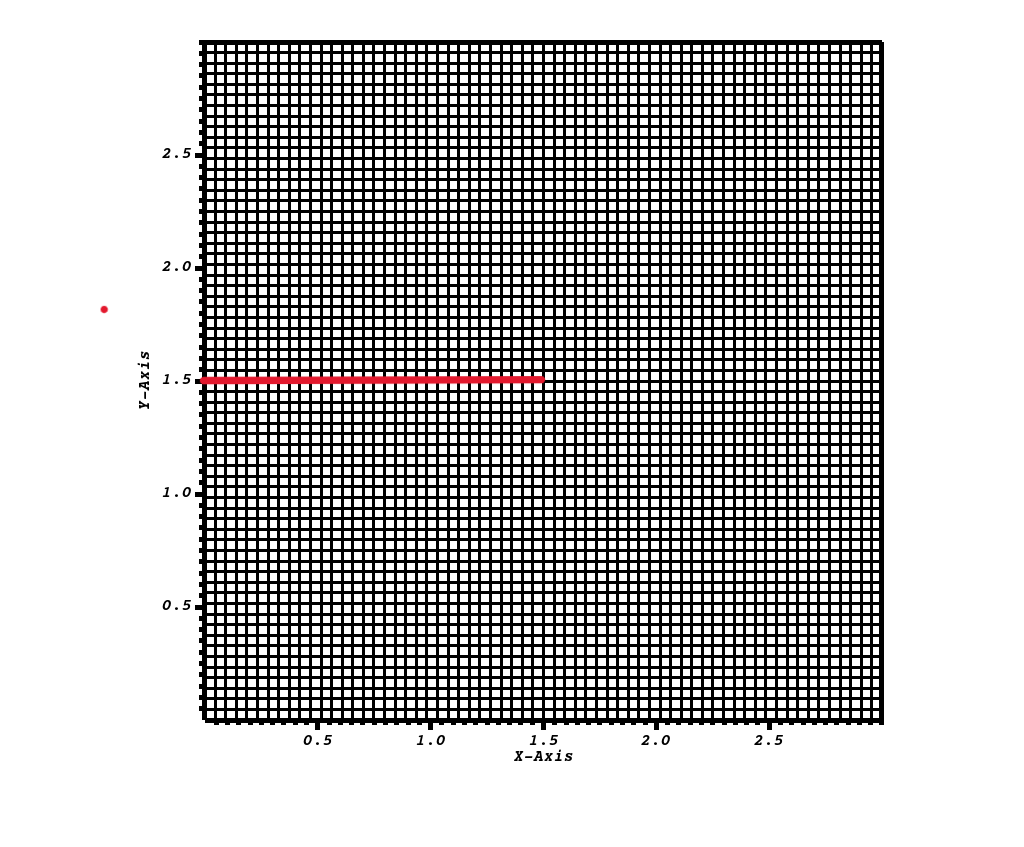}
		\caption{Coarse mesh}
		\label{fig:sub1ini}
	\end{subfigure}\hfill
	\begin{subfigure}{0.45\textwidth}
		\includegraphics[width=\linewidth]{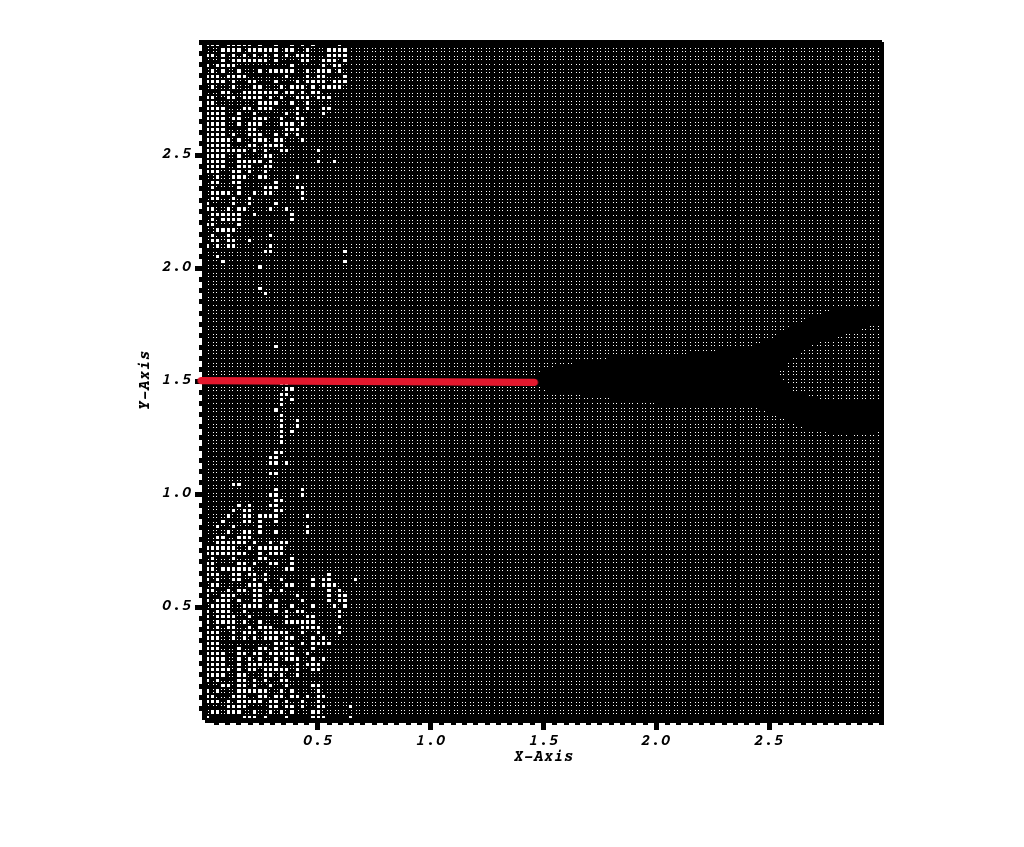}
		\caption{Final mesh at the last time-step}
		\label{fig:sub2final}
	\end{subfigure}
	\caption{Illustration of the computational mesh, with the initial slit highlighted by the red line.}
	\label{fig:meshes}
\end{figure}
\Cref{fig:meshes} illustrates the state of the adaptively refined computational mesh at the initial  and final stages (cf., \Cref{fig:sub1ini} and \Cref{fig:sub2final}, respectively) of the simulation. The simulation commences on a uniform base mesh composed of $4,096$ quadrilateral cells ($64 \times 64$ grid), which corresponds to $4,257$ degrees of freedom (DOFs) when discretized with bilinear $\mathbb{Q}_1$ finite elements. The adaptive mesh refinement is driven by a marking threshold; any cell with an error indicator exceeding a tolerance of $10^{-3}$ is flagged for refinement. This strategy results in immediate local refinement around the initial slit, increasing the mesh complexity to $6,187$ active cells and $6,428$ DOFs after the first time step alone.

By the final time step ($t=1600$), the mesh has been extensively adapted to resolve the propagating fracture, comprising a total of $85171$ active cells. Over the course of the simulation, more than half of these final cells have been subject to at least one refinement event, highlighting the dynamic nature of the mesh and the efficiency of the AMR strategy. To maintain a well-conditioned system, a maximum of four refinement levels is permitted for any cell descending from the base mesh. This constraint imposes a minimum mesh size of $h_{\min} \approx 0.002929$ in the most refined regions near the crack.

\begin{figure}
	\centering
	\begin{subfigure}{0.45\textwidth}
		\centering
		\includegraphics[width=\linewidth]{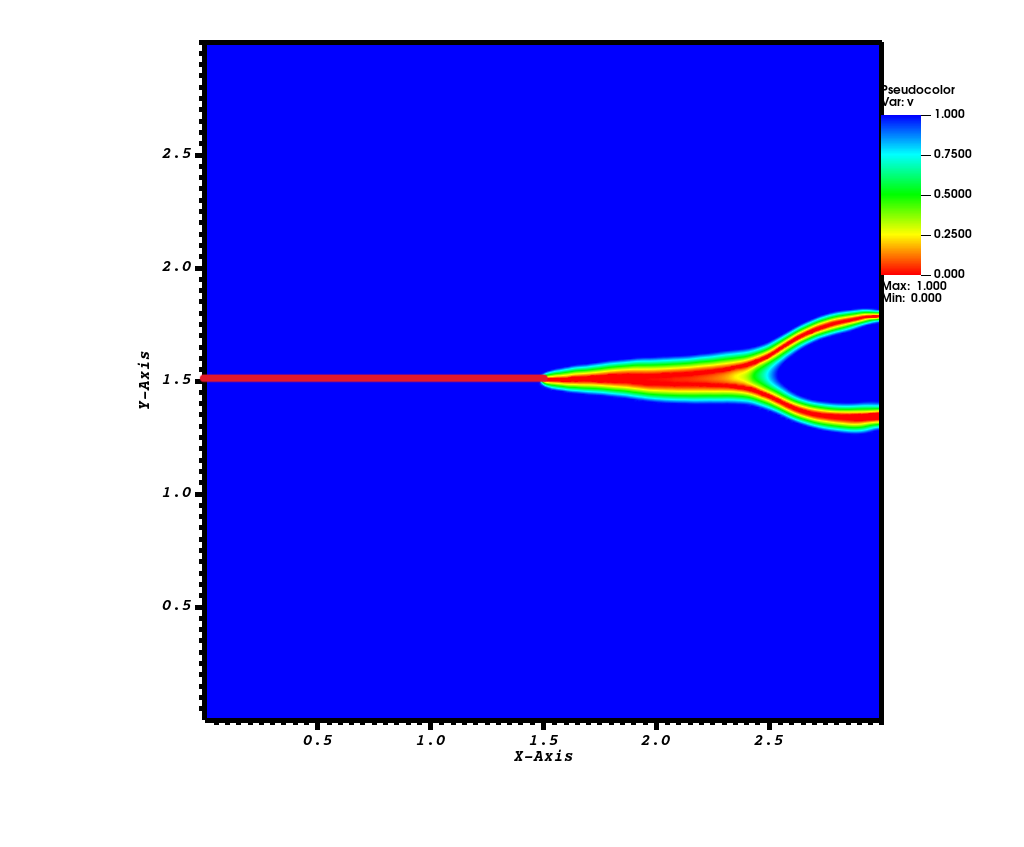}
		\caption{Final phase-field ($v$).}
		\label{fig:sub-a}
	\end{subfigure}
	\hfill
	\begin{subfigure}{0.45\textwidth}
		\centering
		\includegraphics[width=\linewidth]{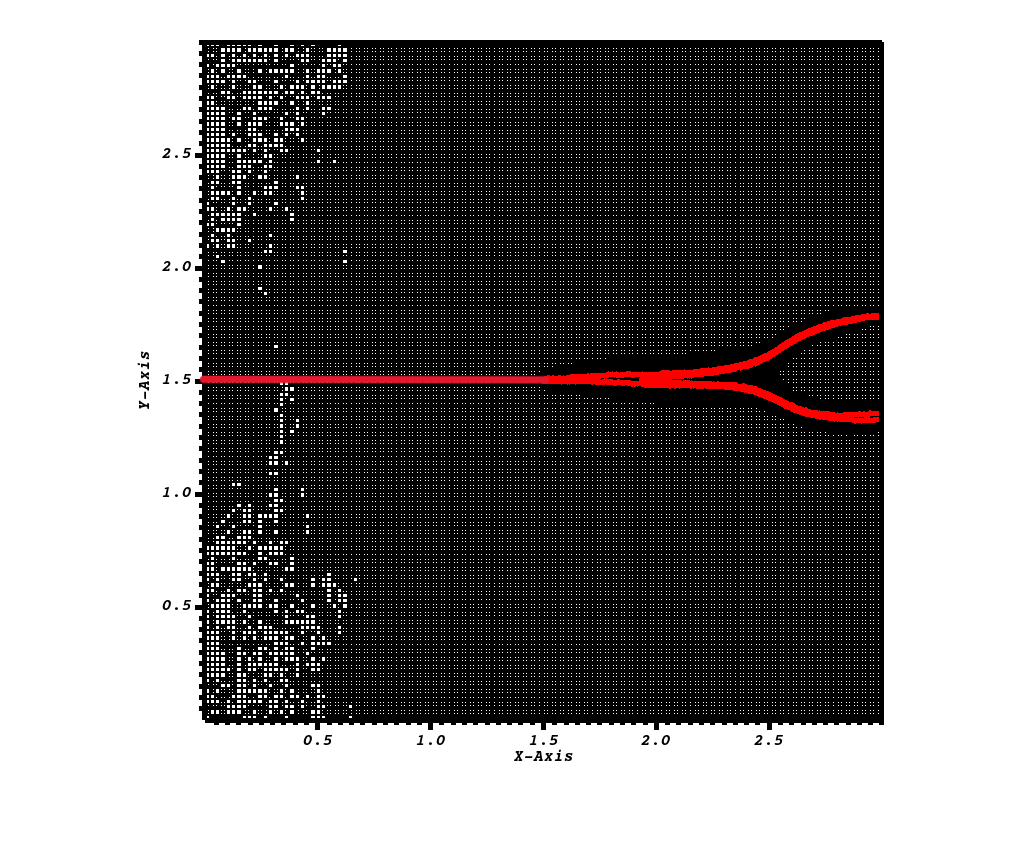}
		\caption{Adapted mesh with $v$-contour.}
		\label{fig:sub-b}
	\end{subfigure}
	\caption{Computational results at the final time step. 
		\textbf{Left:} Final phase-field ($v$), indicating the fracture path. 
		\textbf{Right:} Adapted mesh showing the contour for $0 \leq v \leq 0.01$.}
	\label{fig:v_mesh_stress}
\end{figure}
%%%%%%%%%%%%%%%%%%%
\begin{figure}
	\centering 
	\begin{subfigure}{0.49\textwidth}
		\centering
		\includegraphics[width=\linewidth]{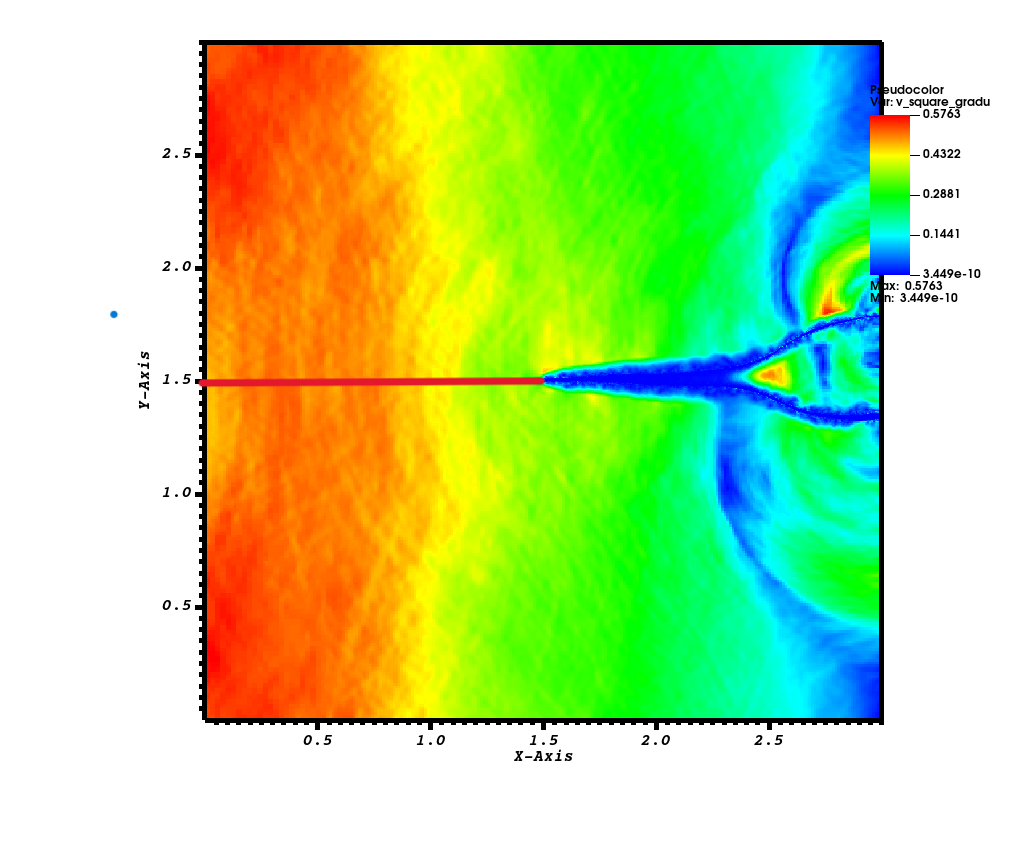}
		\caption{Regularized stress field.}
		\label{fig:sub-c}
	\end{subfigure}
	\hfill
	\begin{subfigure}{0.49\textwidth}
		\centering
		\includegraphics[width=\linewidth]{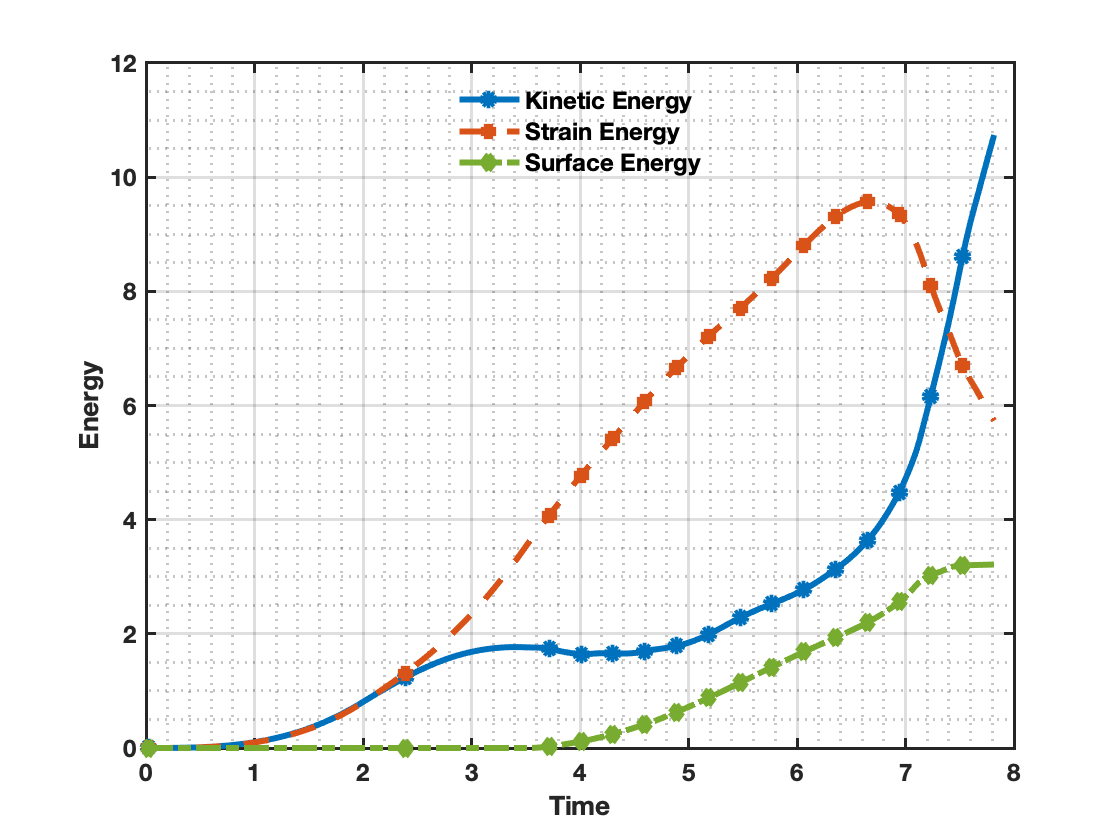}
		\caption{Evolution of the energies as a function of time.}
		\label{fig:energies} 
	\end{subfigure}
	\caption{\textbf{Left:} Regularized stress field. 
		\textbf{Right:} Evolution of kinetic, strain, and surface energies as a function of time.}
	\label{fig:energies123} 
\end{figure}
\Cref{fig:sub-a} presents the computational results for the final state of the phase-field variable ($v$), which provides a regularized approximation of the damage field. The simulation successfully captures a complex fracture pattern, including a clear depiction of crack branching. These results underscore the efficacy of the error indicator proposed in this work. The mesh adaptation is not only concentrated precisely along the entire damage zone (i.e., where $0 < v < 1$) but is also judiciously distributed into the surrounding regions where the solution field, while smoother, exhibits significant gradients. This demonstrates the indicator's crucial ability to resolve both singular and regular features of the solution. To visualize the extent of the diffuse damage, the right panel \Cref{fig:sub-b}  displays a contour plot for $v$ in the range $0 \le v \le 0.01$. The upper bound for this contour is intentionally chosen to match the irreversibility threshold, thereby highlighting the region where the material has undergone incipient damage.
Further,   \cref{fig:sub-a}  visualizes the regularized elastic energy density field, given by the expression $((1-\kappa)v^2 + \kappa) \| \nabla \mathbf{u} \|^2$. This scalar quantity serves as a proxy for the local stress state, with the color map selected such that the darkest blue hues indicate regions of intense stress concentration, which are predominantly localized ahead of the propagating crack tips (cf., \Cref{fig:sub-b} ).

\Cref{fig:energies} plots the temporal evolution of the system's primary energy components: kinetic, elastic strain, and surface energies. In the initial phase, the domain accumulates elastic strain energy under the applied load. The plot reveals a critical transition at approximately $t \approx 3.8$, which marks the onset of fracture initiation. At this juncture, the stored strain energy reaches its peak and begins to decrease sharply. This release of elastic energy is converted into surface energy, as evidenced by the concurrent and rapid increase in the surface energy curve. This behavior is consistent with Griffith's energy criterion and signifies that the stored potential energy is being dissipated to drive dynamic crack propagation from the tip of the initial slit. 
\begin{figure}
	\centering 
	\begin{subfigure}{0.49\textwidth}
		\centering
		\includegraphics[width=\linewidth]{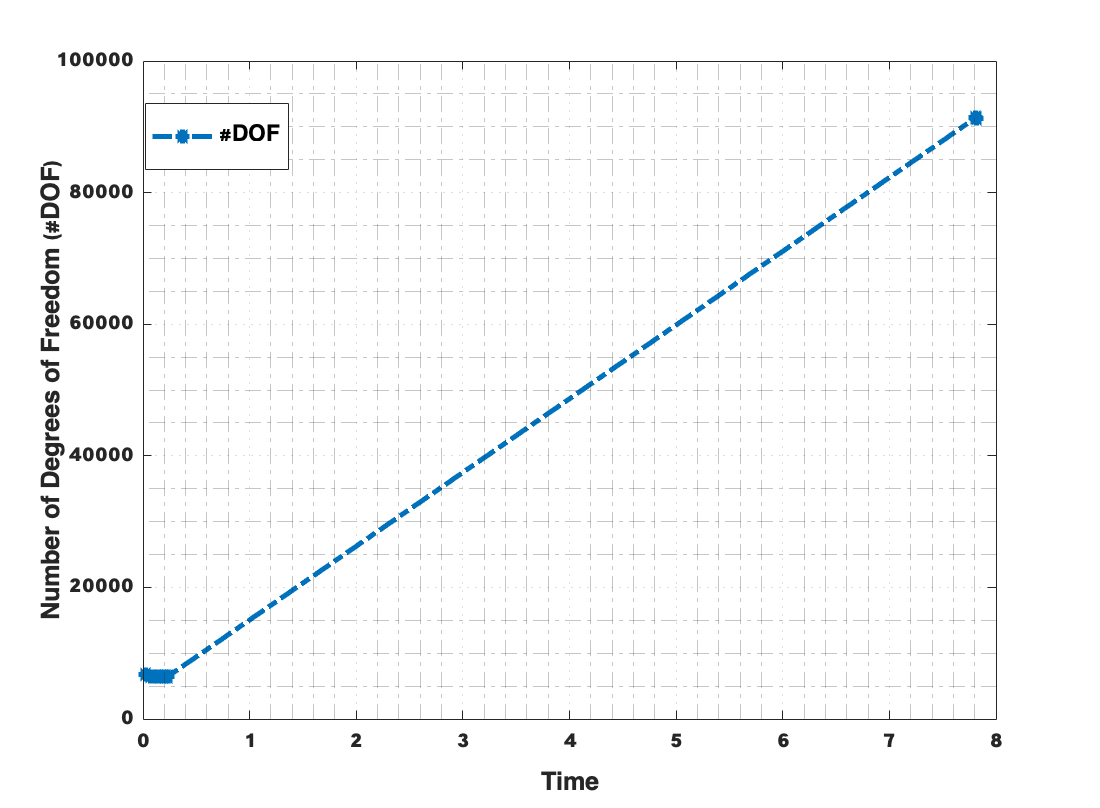}
		\caption{\# DOF vs Time.}
		\label{doft}
	\end{subfigure}
	\hfill
	\begin{subfigure}{0.49\textwidth}
		\centering
		\includegraphics[width=\linewidth]{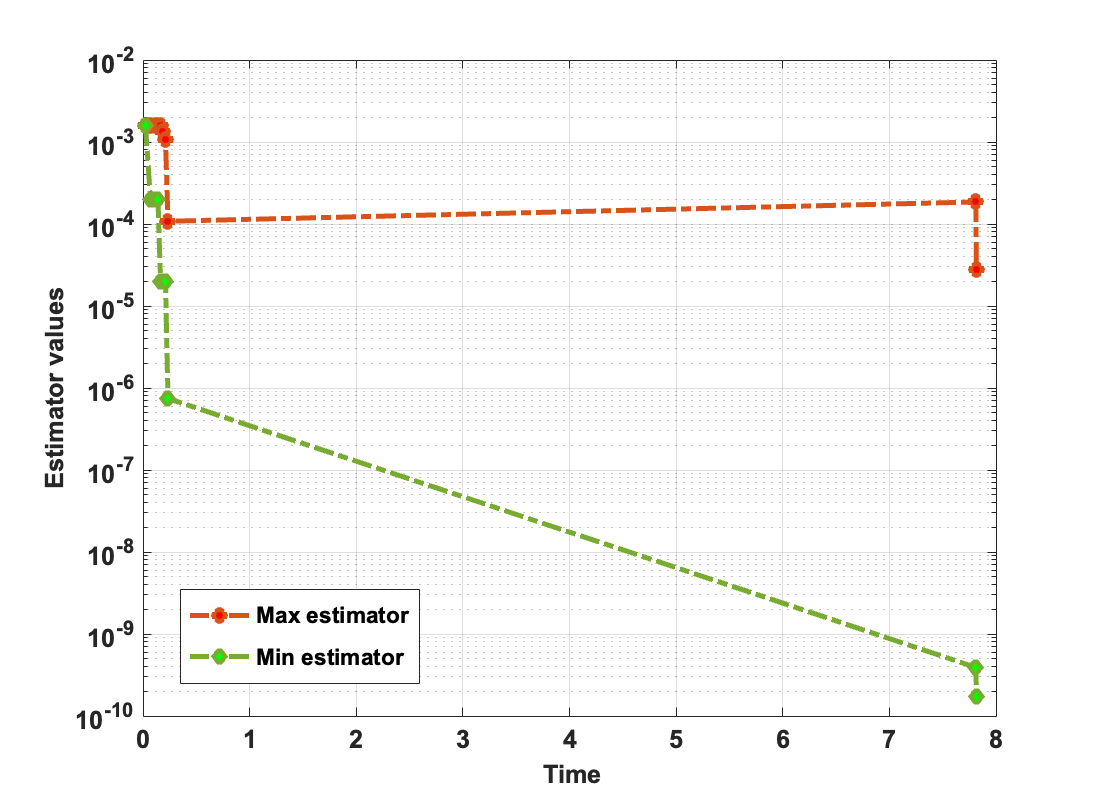}
		\caption{Estimator vs Time.}
		\label{esttime} 
	\end{subfigure}
	\caption{\textbf{Left:} Number of degrees of freedoms (\#DOF), and
		\textbf{Right:} Minimum and Maximum values of estimator $\mathpzc{R}_h$ on different time steps.}
	\label{dofandest} 
\end{figure}
The simulation was run for 1600 time steps using an adaptive mesh refinement strategy. As the fracture evolves, the mesh is refined, which increases the number of degrees of freedom (\#DOF) to ensure accurate resolution. The refinement strategy chooses the top $20\%$ of the cells with maximum value of $\mathpzc{R}_h$ and coarsens $5\%$ of the cells with minimum value of $\mathpzc{R}_h$. Further, \Cref{doft} shows the DOF vs time, while the maximum and minimum values over time is depicted by \Cref{esttime}. 
%%%%%%%%%%%%%%%%%% \Cref{table1:data} summarizes key metrics at selected time levels, where the substantial decrease in the estimator values confirms their effectiveness. 
%\begin{table}[t!]
%	\centering
%	\caption{Computation of max and min values of the estimator at various time steps. } \label{table1:data}
%	\begin{tabular}{@{}l l l l l@{}}
%		\hline 
%		~~$\text{Time}$ &  $\overset{\text{\# DOF}}{~~}$ & ~~~~~$\mathpzc{R}_h^{max}$  &~~~~~ $\mathpzc{R}_h^{min}$\\
%		\hline \hline
%		~~~0.0234375      &~ 6781        &~ 1.582e-03       &~ 1.582e-03  \\
%		~~	0.0703125       &~ 6428       &~ 1.582e-03       &~ 1.988e-04   \\
%		~~	0.117187          &~  6481       &~1.582e-03       &~ 1.988e-04   \\
%		~~	0.140625        &~  6481       &~ 1.582e-03       &~ 1.988e-04  \\
%		~~	0.164062        &~  6484      &~ 1.582e-03       &~ 1.988e-05\\
%		~~	0.1875             &~  6460      &~ 1.361e-03       &~ 1.988e-05 \\
%		~~	0.210937        &~  6484      &~ 1.086e-03        &~ 1.988e-05 \\
%		~~	0.234375        &~  6475      &~ 1.079e-04         &~ 7.438e-07 \\
%		~~~~    \dots                & ~~~~\dots          &~~~~ \dots                    & ~~~~\dots \\
%		~~	7.81055           &~  91371     &~ 1.856e-04        &~ 3.923e-10  \\
%		~~	7.81641           &~ 91260     &~ 2.819e-05        &~ 1.724e-10  \\
%		\hline
%	\end{tabular}
%\end{table}
%%%%%%%%%%%%%%%%%%%%%%%%%%%%%%%%%%
\section{Conclusion} \label{sec6:conc}
In this work, we have introduced and rigorously analyzed an adaptive, regularized phase-field framework for dynamic brittle fracture. We established, for the first time, a convergence proof for an adaptive finite element algorithm that couples a discrete wave equation with the minimization of the Ambrosio-Tortorelli energy functional. Our analysis demonstrates that the algorithm converges to the continuous elastodynamics solution and the minimizer of the total energy, with the total residual approaching zero. Furthermore, our numerical experiments validate the efficacy of the proposed adaptive mesh refinement strategy, successfully capturing complex phenomena such as crack branching during anti-plane shear propagation.

This framework lays a robust foundation for future investigations. The adaptive algorithm and its convergence analysis are well-suited for extension to more complex constitutive models, such as strain-limiting theories of elasticity \cite{rajagopal2007elasticity, manohar2025adaptive,manohar2025convergence}. Moreover, the proposed methodology provides a powerful tool for exploring challenging multiphysics problems, including fracture in thermo-mechanical or corrosive environments.

\section*{Acknowledgement}
This material is based upon work supported by the National Science Foundation under Grant No. 2316905. 

\bibliographystyle{siamplain}
\bibliography{RMSMP7.bib}

\begin{thebibliography}{10}

\bibitem{adams1975}
{\sc R.~A. Adams and J.~J.~F. Fournier}, {\em Sobolev Spaces}, Pure and Applied
  Mathematics, Second ed., Elsevier, Amsterdam, 2003.

\bibitem{Ambrosiott1990}
{\sc L.~Ambrosio and V.~M. Tortorelli}, {\em Approximation of functionals
  depending on jumps by elliptic functionals via $\gamma$-convergence}, Comm.
  Pure Appl. Math., 43 (1990), pp.~999--1036.

\bibitem{Ambrosiottapprox1992}
{\sc L.~Ambrosio and V.~M. Tortorelli}, {\em On the approximation of free
  discontinuity problems}, Boll. Un. Mat. Ital. B (7), 6 (1992), pp.~105--123.

\bibitem{arndt2021deal}
{\sc D.~Arndt, W.~Bangerth, D.~Davydov, T.~Heister, L.~Heltai, M.~Kronbichler,
  M.~Maier, J.~P. Pelteret, B.~Turcksin, and D.~Wells}, {\em The deal. ii
  finite element library: Design, features, and insights}, Comput. Math. Appl.,
  81 (2021), pp.~407--422.

\bibitem{FrancfortBourdi2000}
{\sc G.~A.~F. B.~Bourdin and J.-J. Marigo}, {\em Numerical experiments in
  revisited brittle fracture}, J. Mech. Phys. Solids, 48 (2000), pp.~797--826.

\bibitem{bourdin2011time}
{\sc B.~Bourdin, C.~J. Larsen, and C.~L. Richardson}, {\em A time-discrete
  model for dynamic fracture based on crack regularization}, International
  journal of fracture, 168 (2011), pp.~133--143.

\bibitem{burke2010adaptive}
{\sc S.~Burke, C.~Ortner, and E.~S{\"u}li}, {\em An adaptive finite element
  approximation of a variational model of brittle fracture}, SIAM J. Numer.
  Anal., 48 (2010), pp.~980--1012.

\bibitem{burke2010BurkeOrtnerEndre}
{\sc S.~Burke, C.~Ortner, and E.~S{\"u}li}, {\em Adaptive finite element
  approximation of the francfort--marigo model of brittle fracture}, in
  Approximation and Computation: In Honor of Gradimir V. Milovanovi{\'c},
  Springer, 2010, pp.~297--310.

\bibitem{burke2013adaptive}
{\sc S.~Burke, C.~Ortner, and E.~S{\"u}li}, {\em An adaptive finite element
  approximation of a generalized ambrosio--tortorelli functional}, Math. Models
  Methods Appl. Sci., 23 (2013), pp.~1663--1697.

\bibitem{Verfurth1999}
{\sc H.~J.~S. Fernando, S.~M. Lee, and J.~Anderson}, {\em Error estimates for
  some quasi-interpolation operators}, M2AN Math. Model. Numer. Anal., 33
  (1999), pp.~695--713.

\bibitem{fernando2025b}
{\sc M.~P. Fernando and S.~Mallikarjunaiah}, {\em An {AT1} phase-field
  framework for quasi-static anti-plane shear fracture: Unifying $\xi$-based
  adaptivity and nonlinear strain energy density function}, arXiv preprint
  arXiv:2506.23249,  (2025).

\bibitem{fernando2025xi}
{\sc M.~P. Fernando and S.~M. Mallikarjunaiah}, {\em $\xi$-based adaptive phase
  field model for quasi-static anti-plane fracture}, arXiv preprint
  arXiv:2506.12360,  (2025).

\bibitem{Francfort1998}
{\sc G.~A. Francfort and J.-J. Marigo}, {\em Revisiting brittle fracture as an
  energy minimization problem}, J. Mech. Phys. Solids, 46 (1998),
  pp.~1319--1342.

\bibitem{Giacomini2005}
{\sc A.~Giacomini}, {\em Ambrosio-tortorelli approximation of quasi- static
  evolution of brittle fractures}, Cale. Var. Partial Differential Equations,
  22 (2005), pp.~129--172.

\bibitem{HechtLozinski2018}
{\sc F.~Hecht and A.~Lozinski}, {\em A priori error estimates for the phase
  field approach to fracture}, Mathematical Models and Methods in Applied
  Sciences, 28 (2018), pp.~2399--2446,
  \url{https://doi.org/10.1142/S021820251850046X}.

\bibitem{Johnson2009}
{\sc C.~Johnson}, {\em Numerical Solution of Partial Differential Equations by
  the Finite Element Method}, Dover, 2009.
\newblock Unabridged republication of the 1987 edition.

\bibitem{Ambrosio2002}
{\sc N.~F. L.~Ambrosio and D.~Pallara}, {\em Functions of Bounded Variation and
  Free Discontinuity Problems}, Oxford University Press, New York, 2000.

\bibitem{larsen2010models}
{\sc C.~J. Larsen}, {\em Models for dynamic fracture based on {G}riffith’s
  criterion}, in IUTAM Symposium on Variational Concepts with Applications to
  the Mechanics of Materials: Proceedings of the IUTAM Symposium on Variational
  Concepts with Applications to the Mechanics of Materials, Bochum, Germany,
  September 22-26, 2008, Springer, 2010, pp.~131--140.

\bibitem{larsen2010existence}
{\sc C.~J. Larsen, C.~Ortner, and E.~S{\"u}li}, {\em Existence of solutions to
  a regularized model of dynamic fracture}, Mathematical Models and Methods in
  Applied Sciences, 20 (2010), pp.~1021--1048.

\bibitem{manohar2025adaptive}
{\sc R.~Manohar and S.~M. Mallikarjunaiah}, {\em Adaptive finite element
  convergence analysis of {AT1} phase-field model for quasi-static fracture in
  strain-limiting solids}, arXiv preprint arXiv:2507.00376,  (2025).

\bibitem{manohar2025convergence}
{\sc R.~Manohar and S.~M. Mallikarjunaiah}, {\em Convergence analysis of
  adaptive finite element algorithms for a regularized variational model of
  quasi-static brittle fracture in ``strain-limiting'' elastic solids}, arXiv
  preprint arXiv:2505.19801,  (2025).

\bibitem{rajagopal2007elasticity}
{\sc K.~R. Rajagopal}, {\em The elasticity of elasticity}, Zeitschrift f{\"u}r
  Angewandte Mathematik und Physik (ZAMP), 58 (2007), pp.~309--317.

\bibitem{WheelerWickWollner2014}
{\sc M.~F. Wheeler, T.~Wick, and W.~Wollner}, {\em An augmented-{L}agrangian
  method for the phase-field approach for pressurized fractures}, Computer
  Methods in Applied Mechanics and Engineering, 271 (2014), pp.~69--85.

\end{thebibliography}
\end{document}